\newtheorem{thm}[equation]{Theorem}
\newtheorem*{thm*}{Theorem}
\newtheorem{lem}[equation]{Lemma}
\theoremstyle{definition}
\newtheorem*{rmk}{Remark}
\numberwithin{equation}{section}
\DeclareMathOperator{\comass}{comass}
\DeclareMathOperator{\mass}{mass}
\DeclareMathOperator{\vol}{vol}
\DeclareMathOperator{\id}{id}
\DeclareMathOperator{\Lip}{Lip}
\DeclareMathOperator{\poly}{poly}
\newcommand{\ZZ}{\mathbb{Z}}
\newcommand{\RR}{\mathbb{R}}
\newcommand{\epsi}{\varepsilon}
\begin{document}
\title{A hardness of approximation result in metric geometry}
\author{Zarathustra Brady, Larry Guth, and Fedor Manin\thanks{MSC 2010 classes: 53C23, 68Q17.  Keywords: hyperspherical radius, hardness of approximation.}}
\maketitle
\begin{abstract}
  We show that it is $\mathsf{NP}$-hard to approximate the hyperspherical radius
  of a triangulated manifold up to an almost-polynomial factor.
\end{abstract}

\section{Introduction}

This paper is concerned with the problem of estimating the smallest Lipschitz
constant of a map with a given degree.  We consider this problem from the point
of view of computational complexity.

Suppose that $\Sigma$ is a closed oriented triangulated manifold of dimension
$n$.  We equip $\Sigma$ with a metric by making each simplex a unit equilateral
Euclidean simplex.   We let $L_{\neq 0}(\Sigma)$ and $L_1(\Sigma)$ denote the
smallest Lipschitz constant of a map from $\Sigma$ to the unit $n$-sphere with
non-zero degree and with degree 1, repsectively.  When $\Sigma$ is a triangulated
2-sphere, the paper \cite{GuthSurf} gives a polynomial time algorithm to estimate
$L_{\neq 0}(\Sigma)$ up to a constant factor.  In contrast, if $\Sigma$ is a
triangulated surface of arbitrary degree, we show that it is $\mathsf{NP}$-hard
to approximate $L_{\neq 0}(\Sigma)$ up to a constant factor, or indeed to within a
far wider range.  Similarly, if $n \ge 3$ and $\Sigma$ is a triangulation of
$S^n$, then we show that it is $\mathsf{NP}$-hard to estimate
$L_{\neq 0}(\Sigma)$.  Here is a more precise and general statement.  

\begin{thm} \label{thm:main}
  Let $n \geq 2$.  For $\Sigma$ a triangulation of $S^n$ (if $n \geq 3$) or a
  triangulated surface (if $n=2$), both $L_{\neq 0}(\Sigma)$ and $L_1(\Sigma)$ are
  $\mathsf{NP}$-hard to approximate to within $N^{c/\log\log N}$, where
  $N=\vol \Sigma$ and $c>0$ depends on $n$.
\end{thm}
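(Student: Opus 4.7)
The factor $N^{c/\log\log N}$ strongly suggests a reduction from a constraint-satisfaction problem whose gap version is $\mathsf{NP}$-hard to distinguish within an almost-polynomial factor — the canonical candidate being Label Cover with Moshkovitz--Raz parameters (or a closely related problem such as Min-Rep or $k$-CSP with appropriate gap). Given such an instance $I$ with $M$ constraints, the plan is to build in polynomial time a triangulated manifold $\Sigma = \Sigma(I)$ of volume $N = \poly(M)$ for which the quantities $L_{\neq 0}(\Sigma)$ and $L_1(\Sigma)$ are both small when $I$ is satisfiable and large when $I$ is far from satisfiable, with the two regimes separated by a factor $N^{c/\log\log N}$.

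The construction should assemble $\Sigma$ out of simple pieces: a ``reference'' triangulated manifold that admits an obvious cheap degree-one map to $S^n$, decorated by local gadgets corresponding to the variables and the constraints of $I$. For $n \geq 3$ the topological flexibility afforded by connected sums with balls and local surgery makes it possible to keep $\Sigma$ a triangulated sphere while encoding rich combinatorial data; for $n=2$ the same information has to be carried instead by the genus of $\Sigma$, with handles playing the role of variable-choice gadgets and their attaching loops playing the role of constraints.

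Completeness — producing a cheap degree-one map when $I$ is satisfiable — should be the straightforward direction: each gadget is given a ``correct'' local model determined by a satisfying assignment, and the local maps are glued together into a global map to $S^n$ whose Lipschitz constant can be bounded explicitly by a small factor. The hard part of the argument is soundness: showing that when $I$ is far from satisfiable, \emph{every} non-zero-degree map $f\colon \Sigma \to S^n$ has large Lipschitz constant. The natural tool is a coarea / preimage argument: pick a generic point $p \in S^n$, use the Lipschitz bound on $f$ to control the volume of tubular neighborhoods of $f^{-1}(p)$, and extract from $f^{-1}(p)$ (or from an averaged collection of such preimages) an implicit assignment to the variables of $I$ that would satisfy many more constraints than the NO assumption allows. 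Making this extraction quantitative enough to deliver the almost-polynomial gap $N^{c/\log\log N}$, uniformly across all admissible $f$, is the principal obstacle — it is here that one must efficiently transport the PCP gap through a geometric argument, rather than losing it in bookkeeping constants.
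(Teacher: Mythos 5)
Your proposal correctly identifies the overall shape of the argument (a gap reduction with a completeness and a soundness direction), but it leaves the entire mathematical content of the soundness direction unproven, and the tool you name for it would not suffice. Soundness requires a lower bound on the Lipschitz constant of \emph{every} map $f\colon\Sigma\to S^n$ of nonzero degree, and such a map is under no obligation to respect your gadgets: it can have positive and negative local degrees that cancel, and its point-preimages $f^{-1}(p)$ can be wild $(n-1)$-cycles threading through the manifold in ways that decode to no assignment at all. A coarea bound controls only the volume of $f^{-1}(p)$, not its homology class or combinatorial structure, so "extracting an implicit assignment" from an arbitrary low-Lipschitz map is exactly the step for which the paper needs its main technical machinery, and your sketch contains no substitute for it. (There is also a smaller mismatch: the $N^{c/\log\log N}$ gap in the paper is inherited directly from Dinur's hardness result for the shortest vector problem in $\ell^\infty$, not from a Label Cover / PCP instance applied directly to the geometry.)

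The paper's actual route is structured quite differently. The geometric half is a theorem from quantitative homotopy theory (the shadowing principle, Theorem \ref{lip=comass}) showing that for a homotopy class $\alpha\in[X,S^n]$ one has $\Lip(\alpha)\sim\|\alpha^*[S^n]\|_{\comass}^{1/n}$ whenever the comass is at least $1$; this converts the minimization over all maps into a minimization of a norm over the lattice $H^n(X;\ZZ)$. The complex $X$ is then engineered, by wedging spheres and attaching mapping cylinders along prescribed homotopy classes, so that this norm minimization is an instance of $\ell^\infty$-SVP, to which Dinur's hardness result applies. Finally, the manifold $\Sigma$ is not built from CSP gadgets but as a metric approximation of $X$ (a dense net of small spheres joined by thin tubes for $n=2$; a Ferry--Okun-type construction for $n\ge 3$), with a girth lemma guaranteeing that any sufficiently low-Lipschitz map on $\Sigma$ factors up to homotopy through $X$, so that $L_{\neq 0}(\Sigma)$ is comparable to $L_{\neq 0}(X,h)$. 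To rescue your plan you would need to supply an a priori structure theorem for all low-Lipschitz maps to $S^n$ playing the role of the shadowing principle; without it the reduction does not close.
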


Compare the total range of possible values for these quantities.  The
``roundest'' possible shape for $\Sigma$ is approximately a round $n$-sphere of
radius $N^{1/n}$, and in the worst case one can contract all but a single
simplex.  Therefore,
$$N^{-1/n} \lesssim L_{\neq 0}(\Sigma) \lesssim L_1(\Sigma) \lesssim 1.$$

\paragraph*{Background.}
In the 1970s Gromov began to investigate the smallest Lipschitz constant of a map
in a given homotopy class \cite{GrHED}.  For instance, he showed that the
smallest Lipschitz constant of a degree $D$ map from the unit $n$-sphere to
itself is $\sim D^{1/n}$.  In \cite{GrLa}, Gromov and Lawson defined the
hyperspherical radius of a (closed oriented) Riemannian $n$-manifold
$(\Sigma^n, g)$ as the largest radius $R$ so that $(\Sigma^n, g)$ admits a
1-Lipschitz map to the round $n$-sphere of radius $R$ with non-zero degree.  The
hyperspherical radius of $\Sigma$ is $1/L_{\neq 0}(\Sigma)$.  Gromov and Lawson
proved that if the scalar curvature of $g$ is at least 1, and $\Sigma$ is spin,
then the hyperspherical radius of $(\Sigma^n, g)$ is at most a constant $C(n)$.  

The most well-known open problem about hyperspherical radius involves the
universal covers of aspherical manifolds.  Suppose that $\Sigma$ is a closed
manifold.  If we choose a metric $g$ on $\Sigma$, then we can consider the
hyperspherical radius of the universal cover $(\tilde \Sigma, \tilde g)$.
Different metrics $g$ may lead to different hyperspherical radii, but whether the
hyperspherical radius is infinite does not depend on the metric.  For instance,
on $S^1 \times S^2$, the hyperspherical radius of the universal cover is always
finite, but on $T^3$ the hyperspherical radius of the universal cover is always
infinite.  It is an open question whether every closed aspherical manifold has
universal cover with infinite hyperspherical radius.  This open question is
connected to scalar curvature (cf.\ \cite{GrLa}) and to the Novikov conjecture
(cf.\ \cite{DrFeWe}).

This problem is difficult partly because the hyperspherical radius is harder to
estimate than it may sound at first.  Our main theorem shows from a different
perspective that hyperspherical radius is hard to estimate.  That said, our
result is the best possible for example in the sense that in exponential time,
hyperspherical radius can be estimated to within a constant; we make this precise
in an appendix.  This contrasts, for example, with the related problem of
determining the least Lipschitz constant of a degree nonzero map between two
simplicial manifolds, both given as input.  In that case, the existence of such a
map is undecidable, and even if it exists, its minimal Lipschitz constant may
grow faster than any computable function of the volumes of the two manifolds.
These observations are rooted in the work of Nabutovsky and Weinberger,
cf.\ \cite{CRM}.


\paragraph*{Idea of proof.}
The proof of Theorem \ref{thm:main} builds on recent progress in quantitative
topology due to Chambers, Dotterrer, Ferry, Weinberger, and the last
author---cf.\ \cite{CDMW}, \cite{CMW}, and \cite{PCDF}.  This work answered a
number of questions from quantitative topology that were raised by Gromov in the
1990s.  For instance, they prove that if $f$ is a contractible map from the unit
$m$-sphere to the unit $n$-sphere with Lipschitz constant $L$, then $f$ extends
to a nullhomotopy $F: B^{m+1} \rightarrow S^n$ with Lipschitz constant at most
$C(m,n)(L+1)$ (see \cite{CDMW} for the case when $n$ is odd and \cite{Berd} and
\cite{Resc} for the case when $n$ is even).  This progress involves new ideas
about how to construct maps with controlled Lipschitz constant and we build on
those ideas in our proof.

Let us briefly describe the examples of high genus surfaces $\Sigma$ where
$L_{\neq 0}(\Sigma)$ is hard to estimate.  We begin with a complicated simplicial
complex $X$---in particular the dimension of $H_2(X; \mathbb{Z})$ is quite large.
The surface $\Sigma$ sits inside of $X$.  Topologically, it represents a nonzero
homology class $[\Sigma] \in H_2(X, \ZZ)$.  As a metric space, $\Sigma$ is very
similar to $X$: it comes within a distance $\epsilon$ of every point of $X$, and
moreover, for any two points in $\Sigma$, the distance between them in $\Sigma$
is almost the same as the distance between them in $X$ (using the path metric on
both spaces).  This last feature requires $\Sigma$ to have high genus---we can
achieve it by adding lots of handles to provide short routes between various
points.

Since $\Sigma$ and $X$ are so similar as metric spaces, if $f$ is a map from
$\Sigma$ to the unit $2$-sphere with Lipschitz constant not too large, then $f$
extends to a map $g: X \rightarrow S^2$ with a similar Lipschitz constant.  Each
map $g: X \rightarrow S^2$ induces a cohomology class
$\alpha = g^*([S^2]^*) \in H^2(X, \ZZ)$.  For each choice of
$\alpha \in H^2(X, \ZZ)$, let $\Lip(\alpha)$ be the smallest Lipschitz constant
of a map $g: X \rightarrow S^2$ with $g^*([S^2]^*) = \alpha$.  Using the recent
ideas in quantitative topology we described above, we are able to accurately
estimate $\Lip(\alpha)$ for each $\alpha \in H^2(X, \ZZ)$.  But
$L_{\neq 0}(\Sigma)$ is approximately the minimum of $\Lip(\alpha)$ over all
$\alpha \in H^2(X, \ZZ)$ with $\alpha([\Sigma]) \neq 0$.  This is a minimization
problem over $H^2(X, \ZZ)$, which is isomorphic to $\ZZ^D$ for a large $D$.  It
turns out to be closely related to the shortest vector problem for lattices in
$L^\infty$.  Dinur \cite{Dinur} has a hardness of approximation result for the
shortest vector problem in $L^\infty$, and using her paper we get our hardness of
approximation result for $L_{\neq 0}$.  

\paragraph*{Further questions.}
There are many quantities studied in metric geometry which it is not obvious how
to estimate, and it would be interesting to understand the computational
complexity of estimating them to various degrees of accuracy.  For instance, it
unknown how hard it is to approximate Uryson widths (cf. \cite{GrWRI}) or minimax
volumes (cf. \cite{GuMV}).   As another example, given a simplicial complex $X$
and a homology class $h \in H_k(X, \ZZ)$, the mass of $h$ is defined to be the
smallest mass of an integral $k$-cycle in the class $h$ (and the mass of a
$k$-cycle is its $k$-dimensional volume counted with multiplicity).  How hard is
it to approximate the mass of a homology class? 

\paragraph*{Organization.}
The paper is organized as follows.  In Section 2 we introduce the comass and
explain how it is related to Lipschitz constants of maps.  In Section 3, we
efficiently approximate the comass using linear programming.  In Section 4, we
use these tools to prove a hardness of approximation result for homologically
non-trivial maps from a complicated complex to the unit sphere.  In Section 5, we
prove our main theorem for the case $n=2$.  In Section 6, we prove our main
theorem for triangulations of $S^n$ with $n \ge 3$.  Finally, we have also
included an appendix explaining some upper bounds on the complexity of computing
hyperspherical radius.

\paragraph*{Acknowledgements.}
We would like to thank an anonymous referee for correcting a number of typos and
offering other useful suggestions.  F.\ Manin was partially supported by NSF
individual grant DMS-2001042.

\section{Comass and the Lipschitz constant}

Let $X$ be a finite $m$-dimensional simplicial complex endowed with the standard
simplexwise linear metric with edge length 1.  In this section, we discuss the
geometric relationship between maps $X \to S^n$ and the cohomology of $X$.  The
discussion applies when $n$ is odd, or when $n$ is even and $m \leq 2n-2$.

For each homotopy class $\alpha \in [X, S^n]$, we associate $\alpha^*[S^n]$.
This association gives a map from $[X, S^n]$ to $H^n(X; \ZZ)$.   Obstruction
theory shows that this map is uniformly finite-to-one: the cardinality of a point
preimage is bounded by
$$\prod_{k=n+1}^{\dim X} \#\{i\text{-simplices of }X\} \cdot \#\pi_k(S^n).$$
Also, the image of this map contains a finite-index subgroup of
$H^n(X;\mathbb{Z})$, since there is a map $K(\mathbb{Z},n)^{(m)} \to S^n$ of
positive degree. 

In this section, we study the relationship between the geometric properties of a
homotopy class $\alpha \in [X, S^n]$ and the geometric properties of the
cohomology class $\alpha^*[S^n]$.   We will study the Lipschitz norm
$\Lip(\alpha)$ of a class $\alpha \in [X, S^n]$ and compare it to the comass norm
of the cohomology class $\alpha^*[S^n]$.

First we recall the definition of the comass norm.  The comass norm is a norm on
$H^n(X; \RR)$.  Recall that a (real) Lipschitz $n$-chain is a formal sum
$\sum_i a_i \phi_i$ where $a_i \in \mathbb{R}$ and $\phi_i:\Delta^n \rightarrow X$
is a Lipschitz map.  Lipschitz chains can be used to define homology just as well
as singular (i.e.\ continuous) chains.  One advantage of Lipschitz chains is that
there is a volume associated to each $\phi_i$.  The mass of a chain is then given
by
$$\mass\left(\sum_i a_i\phi_i\right)=\sum_i |a_i| \vol(\phi_i).$$
The mass descends to a norm on $H_n(X,\mathbb{R})$, written $\|\cdot\|_{\mass}$.
The dual norm on $H^n(X;\mathbb{R})$ is called the comass---in other words,
$$\|w\|_{\comass} := \sup_{\substack{h \in H_n(X,\mathbb{R})\\\|h\|_{\mass}=1}} |w(h)|.$$
\begin{thm} \label{lip=comass}
  Let $n \geq 2$ and let $X$ be a simplicial complex.  If $n$ is odd or
  $\dim X \leq 2n-2$, then for every homotopy class of maps
  $\alpha \in [X,S^n]$
  $$(\vol(S^n)\|\alpha^*[S^n]\|_{\comass})^{1/n} \leq \Lip(\alpha) \leq
  C(\dim X,n)(\|\alpha^*[S^n]\|_{\comass}^{1/n}+1).$$
  When $n$ is even and $\dim X=2n-1$, the first inequality holds, but the second
  holds only in a weaker sense: for every element $a \in H^n(X;\mathbb{R})$ which
  is in the image of $[X,S^n]$, there is \emph{some} $\alpha \in [X,S^n]$ such
  that $\alpha^*[S^n]=a$ and
  $$\Lip(\alpha) \leq C(\dim X,n)(\|a\|_{\comass}^{1/n}+1).$$
\end{thm}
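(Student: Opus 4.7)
The lower bound is the easier direction. Given an $L$-Lipschitz representative $f:X\to S^n$ of $\alpha$, I would pull back the normalized volume form $\omega/\vol(S^n)$ on $S^n$: because $f$ is $L$-Lipschitz, this pullback has pointwise comass at most $L^n/\vol(S^n)$, so its integral over any Lipschitz $n$-cycle of mass one is bounded by $L^n/\vol(S^n)$. Taking the supremum over $H_n(X,\RR)$ yields $\|\alpha^*[S^n]\|_{\comass}\le L^n/\vol(S^n)$, which rearranges to the claimed inequality.

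For the upper bound, write $w=\alpha^*[S^n]$ and $M=\|w\|_{\comass}^{1/n}$. My plan is to build a representative of $\alpha$ with Lipschitz constant $\lesssim M+1$ in three stages. First, by finite-dimensional duality between the mass norm on $H_n(X,\RR)$ and the comass norm (a Hahn--Banach argument, made effective by the LP of the next section), one obtains a real simplicial cocycle representing $w$ whose value on each $n$-simplex $\sigma$ is at most $CM^n\vol(\sigma)$; rounding yields an integer cocycle $c$ with the same bound up to an additive constant. Second, define $f$ on $X^{(n)}$ by collapsing $X^{(n-1)}$ to a basepoint and, on each $n$-simplex $\sigma$, mapping $\sigma/\partial\sigma\cong S^n$ to $S^n$ by a degree-$c(\sigma)$ map of Lipschitz constant $\lesssim|c(\sigma)|^{1/n}\lesssim M$, using Gromov's classical optimal construction of high-degree self-maps of $S^n$. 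Third, extend skeleton by skeleton: on each $(k+1)$-simplex $\tau$ with $k\ge n$, the restriction $f|_{\partial\tau}$ is an $O(M+1)$-Lipschitz map $S^k\to S^n$, and whenever it is null-homotopic one extends across $\tau$ at comparable Lipschitz cost using the quantitative null-homotopy results of \cite{CDMW,Berd,Resc}.

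The principal obstacle is the extension step, because the obstruction to extending across $(k+1)$-cells lies in $H^{k+1}(X;\pi_k(S^n))$ for each $k\ge n$, and the ambiguity in the extension is governed by $\pi_{k+1}(S^n)$. When $n$ is odd, or when $\dim X\le 2n-2$, every relevant homotopy group is finite, so any obstruction that arises can be killed by modifying $f$ on the $k$-skeleton at bounded Lipschitz cost, and after finitely many such adjustments one can arrange that the resulting map lies in the prescribed homotopy class $\alpha$. When $n$ is even and $\dim X=2n-1$, however, the final extension across $(2n-1)$-cells is ambiguous up to an element of $\pi_{2n-1}(S^n)$, which carries a $\ZZ$-summand detected by the Hopf invariant; the Lipschitz-optimal extension realizes the cohomology class $a$ via \emph{some} specific $\alpha$, but since prescribing a nonzero Hopf invariant forces additional Lipschitz cost of order $|H|^{1/2n}$, one cannot in general pin down the homotopy class, yielding the weaker existential conclusion in that range.
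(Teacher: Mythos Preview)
Your lower bound matches the paper's argument exactly.  For the upper bound, your overall strategy---produce a small cocycle via Hahn--Banach, realize it as a map on the $n$-skeleton, then extend using finiteness of the relevant $\pi_k(S^n)$---is also the paper's, but the implementation differs in two substantive ways.

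First, the paper \emph{subdivides} $X$ at scale $\sim 1/R$ with $R=\lceil\|\alpha^*[S^n]\|_{\comass}^{1/n}\rceil$ before doing anything else.  On the subdivision each $n$-simplex has volume at most $\|\alpha^*[S^n]\|_{\comass}^{-1}$, so the Hahn--Banach cocycle has $\ell^\infty$-norm $\le 1$ and, after rounding, every $n$-simplex carries a map of degree in $\{-(n+2),\ldots,n+2\}$.  From that point each $k$-simplex admits only a \emph{finite} menu $\mathcal F_k$ of possible restrictions, so the Lipschitz bound is automatic and the proof is self-contained.  You instead stay on the original complex, put a degree-$O(M^n)$ map on each simplex, and invoke \cite{CDMW,Berd,Resc} as black boxes for the extensions.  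That does work, but those results are themselves proved by exactly the subdivision-and-finite-menu device, so you are not really bypassing it; and you now owe an argument that twisting an already $(M+1)$-Lipschitz extension on a $k$-cell by a fixed element of the finite group $\pi_k(S^n)$ costs only an additive constant.

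Second, the paper begins with a representative $f\in\alpha$ and \emph{homotopes} it step by step; this guarantees the output is still in $\alpha$.  You build from the cocycle alone and then claim you can steer into the prescribed class by adjustments on higher skeleta.  That is fine, but you should say why the obstruction \emph{classes} vanish (your map on $X^{(n)}$ is homotopic to $\alpha|_{X^{(n)}}$ by Hopf, so it admits the same extensions), and why hitting the specific $\alpha$ among the finitely many classes over $a$ costs only $O(1)$ in Lipschitz constant.

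One actual slip: rounding a real cocycle entrywise does \emph{not} yield a cocycle.  The correct move, which the paper makes, is to start from an integer cocycle $z$ in the class, write $z-\hat z=db$ where $\hat z$ is the real Hahn--Banach minimizer, round $b$ to an integer cochain $\hat b$, and use $z-d\hat b$; this is automatically an integer cocycle and $\lVert z-d\hat b\rVert_\infty \le \lVert\hat z\rVert_\infty + (n+1)$.
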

Notice that if the comass is greater than $1$, this theorem allows us to compute
the Lipschitz norm up to a multiplicative constant; on the other hand, if the
comass is tiny, it gives almost no information.  If $\Sigma$ is a triangulated
$n$-manifold, and $\alpha \in [\Sigma, S^n]$ is the class of degree 1 maps, then 
the bounds given by this theorem are not very strong.  In this case,
$\|\alpha^*[S^n]\|_{comass}=\|[\Sigma]\|_{comass} = 1/\vol(\Sigma)$.  Theorem
\ref{lip=comass} then gives $\vol(\Sigma)^{-1/n}\lesssim \Lip(\alpha) \lesssim 1$.
In Section \ref{S:hard} we will see some homotopy classes where Theorem
\ref{lip=comass} gives sharp bounds.

Theorem \ref{lip=comass} is a corollary of the more general shadowing principle
in quantitative homotopy theory, \cite[Thm.~4.1]{PCDF}, given by the last
author.  However, it is simpler than the general shadowing principle, and all the
tools are already present in the second author's survey paper \cite{RPQT}.  We
give a full proof here which may also be of use as an introduction to
\cite{PCDF}.

\begin{proof}
  Let $f:X \to S^n$ be any representative of the homotopy class $\alpha$.

  We first show the left inequality.  For any $h \in H_n(X;\mathbb{R})$, we have
  $$|f_*h| \leq \frac{(\Lip f)^n\|h\|_{\mass}}{\vol(S^n)};$$
  By definition, this gives
  $$(\vol(S^n)\|\alpha^*[S^n]\|_{\comass})^{1/n} \leq \Lip(\alpha).$$
  Note that this does not depend on the dimension of $X$.

  To show the second inequality when $n$ is odd or $\dim X \leq 2n-2$, we will
  homotope $f$ to a map with the appropriate Lipschitz bound.  Define
  $$R=\lceil\|\alpha^*[S^n]\|_{\comass}^{1/n}\rceil,$$
  and let $X_R$ be a subdivision of $X$ at scale $\sim 1/R$: specifically, we
  ensure that the volume of each $n$-simplex of $X_R$ is
  $\leq \|\alpha^*[S^n]\|_{\comass}^{-1}$, and that all simplices are
  $C(\dim X,n)$-bilipschitz to the standard linear simplex with edge lengths
  $1/R$.  There are many ways to do this \cite[\S2]{CDMW}.

  If the comass is very small, the best we can do is no subdivision at all, hence
  the theorem does not tell us very much in that instance.

  We will homotope $f:X_R \to S^n$ to a map which, for every $k$, restricts to on
  each $k$-simplex of $X_R$ to one of a finite set $\mathcal{F}_k$ of maps.  This
  automatically gives the Lipschitz bound we are looking for.  Specifically:
  \begin{itemize}
  \item For $k \leq n-1$, $\mathcal{F}_k$ consists of a single map, the constant
    map to the basepoint of $S^n$.
  \item For $k=n$, $\mathcal{F}_n$ contains a map
    $(\Delta^n,\partial\Delta^n) \to S^n$ of each degree between $-(n+2)$ and
    $(n+2)$.
  \item For $k>n$, $\mathcal{F}_k$ contains a map for each map
    $\partial\Delta^k \to S^n$ which restricts to an element of
    $\mathcal{F}_{k-1}$ on each $(k-1)$-simplex and relative homotopy class of
    extensions to $\Delta^k$.  The dimension restriction means that this is a
    finite set.
  \end{itemize}
  We will build a sequence of maps $f_k$ homotopic to $f$ such that the
  restrictions of $f_k$ to $k$-simplices of $X_R$ are chosen from
  $\mathcal{F}_k$.

  First we homotope $f$ to a map $f_{n-1}:X_R \to S^n$ which takes $X_R^{(n-1)}$ to
  the basepoint of $S^n$.

  We now homotope this $f_{n-1}$ to a map $f_n$ which has degree $0$ or $\pm 1$ on
  every $n$-simplex of $X_R$.  This is done as follows.  Note that $f_{n-1}$ has a
  well-defined degree on each $n$-simplex of $X_R$.  This defines a simplicial
  cocycle $z \in C^n(X_R;\mathbb{Z})$.

  Now notice that the comass of $\alpha^*[S^n]$ is at least as large as its
  maximum value on \emph{simplicial} $n$-cycles of mass $\leq 1$ in
  $Z_n(X_R;\mathbb{R})$.  In particular, this includes all simplicial cycles
  whose $\ell^1$-norm with respect to the obvious basis is
  $\leq \|\alpha^*[S^n]\|_{\comass}$.  By the Hahn--Banach theorem, this means that
  the $\ell^\infty$-minimal simplicial representative
  $\hat z \in C^n(X_R;\mathbb{R})$ of $\alpha^*[S^n]$ satisfies
  $\lVert \hat z \rVert_\infty \leq 1$.

  Now we can find a chain $b \in C^{n-1}(X;\mathbb{R})$ such that $db=z-\hat z$.
  Let $\hat b=[b] \in C^{n-1}(X;\mathbb{Z})$ be the nearest integer chain to $b$.
  Then the chain $z-d\hat b$ satisfies
  $$\lVert z-d\hat b \rVert_\infty \leq n+2.$$

  We now define a homotopy $h_n:X_R \times [0,1] \to S^n$ between $f_{n-1}$ and a
  new map $f_n$ on cells of the product cell structure on $X_R \times [0,1]$,
  with $[0,1]$ thought of as the 1-simplex:
  \begin{itemize}
  \item Send $(X_R \times [0,1])^{(n-1)}$ to the basepoint of $S^n$.
  \item Map $n$-cells of the form $q \times [0,1]$, where $q$ is an
    $(n-1)$-simplex of $X_R$, to $S^n$ via a map of degree
    $\langle \hat b, q \rangle$
  \item This forces the degree on $p \times \{1\}$, where $p$ is an $n$-simplex
    of $X_R$, to be
    $$\langle z-d\hat b, p\rangle.$$
    We let $f_n|_p$ be the corresponding element of $\mathcal{F}_n$.
  \item Finally, we extend arbitrarily to the rest of $X_R \times [0,1]$.
  \end{itemize}

  Now, suppose we have constructed $f_k:X_R \to S^n$, $k \geq n$.  We homotope to
  $f_{k+1}$ by homotoping, on each $(k+1)$-simplex $p$, to the element of
  $\mathcal{F}_{k+1}$ corresponding to the relative homotopy class of $f_k|_p$.
  Then we extend the homotopy arbitrarily to higher skeleta.

  The map $f_{\dim X}$ is the map we want.

  Finally, suppose that $\dim X=2n-1$ and let $f:X \to S^n$ be some map.  Letting
  $R$ be as before, we can proceed with the same construction of $f_{2n-2}$ on the
  $(2n-2)$-skeleton of $X_R$.  Finally, in the last step there is no obstruction
  to filling in the map on the $(2n-1)$-cells.  Therefore, for each map
  $\partial\Delta^k \to S^n$ which restricts to an element of $\mathcal{F}_{2n-2}$
  on each $(2n-2)$-simplex, we can fix a filling, and then use those fillings to
  extend $f_{2n-2}$ to the $(2n-1)$-cells of $X_R$ with bounded Lipschitz
  constant.  The resulting map is not homotopic to $f$, but induces the same
  class in $H^n(X;\mathbb{R})$.
\end{proof}

\section{Computing the comass} \label{S:comass}

A good feature of the comass is that it can be approximated efficiently using
linear programming.

\begin{lem}
  There is a polynomial-time algorithm which, given an $m$-dimensional simplicial
  complex $X$, an integer $n$, and an
  element $\beta \in H^n(X;\mathbb{R})$, computes a constant
  $\comass_\Delta(\beta)$ such that for some constant $C(m,n)$,
  $$\comass_\Delta(\beta) \leq \|\beta\|_{\comass}
  \leq C(m,n)\comass_\Delta(\beta).$$
\end{lem}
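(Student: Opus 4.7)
The plan is to define $\comass_\Delta(\beta)$ as a fixed normalization of the minimum $\ell^\infty$ norm (with respect to the basis of $n$-simplices) of a simplicial cocycle representing $\beta$.  The algorithm is a linear program: starting from any cocycle representative $z_0$ of $\beta$ (easily produced in polynomial time from the input), introduce coordinates for a free $(n-1)$-cochain $c \in C^{n-1}(X;\mathbb{R})$ together with a scalar variable $t$, and minimize $t$ subject to $-t \leq (z_0 + dc)(\sigma) \leq t$ for every $n$-simplex $\sigma$ of $X$.  The number of variables and constraints is linear in the size of $X$, so the LP is solvable in polynomial time.

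For the lower bound $\comass_\Delta(\beta) \leq \|\beta\|_{\comass}$, every simplicial $n$-cycle $h = \sum_\sigma a_\sigma \sigma$ is a Lipschitz cycle with $\mass(h) = V_n \sum_\sigma |a_\sigma|$, where $V_n$ denotes the volume of the standard unit equilateral $n$-simplex.  By $\ell^1$--$\ell^\infty$ duality (Hahn--Banach), the minimum of $\|z\|_\infty$ over cocycle representatives of $\beta$ equals the supremum of $|\beta(h)|$ over simplicial cycles with $\|h\|_1 \leq 1$.  Setting $\comass_\Delta(\beta) := (\min_z \|z\|_\infty)/V_n$, we therefore get
\[
\comass_\Delta(\beta) \;=\; \sup_{h\text{ simplicial cycle}} \frac{|\beta(h)|}{\mass(h)} \;\leq\; \sup_{h\text{ Lipschitz cycle}} \frac{|\beta(h)|}{\mass(h)} \;=\; \|\beta\|_{\comass}.
\]

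For the upper bound $\|\beta\|_{\comass} \leq C(m,n)\comass_\Delta(\beta)$, the core step is a deformation statement: any Lipschitz $n$-cycle $h$ in the $m$-complex $X$ is homologous to a simplicial $n$-cycle $h'$ with $\mass(h') \leq C(m,n)\mass(h)$.  Granting this, since $\beta(h) = \beta(h')$, we have
\[
\frac{|\beta(h)|}{\mass(h)} \;\leq\; C(m,n)\,\frac{|\beta(h')|}{\mass(h')} \;\leq\; C(m,n)\,\comass_\Delta(\beta),
\]
where the last inequality again invokes the duality identification.  Taking the sup over Lipschitz cycles $h$ finishes the argument.

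The main obstacle is this deformation lemma itself.  It is a simplicial incarnation of the Federer--Fleming deformation theorem: one pushes the support of a Lipschitz chain down from the $m$-skeleton to the $n$-skeleton one dimension at a time, projecting radially from a well-chosen interior point of each high-dimensional simplex onto the boundary of that simplex.  An averaging argument over the choice of projection point ensures that each dimension-lowering step multiplies mass by at most a constant depending only on $m$ and $n$.  Such an argument is carried out in \cite{CDMW} and \cite{PCDF}, and I would invoke the existing statement rather than reprove it from scratch.
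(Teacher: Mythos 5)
Your proposal is correct and matches the paper's argument: both define $\comass_\Delta$ as the simplicial comass, get the lower bound because simplicial cycles are Lipschitz cycles, and get the upper bound from the Federer--Fleming deformation theorem. The only (immaterial) difference is that you compute it via the dual linear program --- minimizing the $\ell^\infty$-norm over cocycle representatives $z_0+dc$ --- whereas the paper solves the primal LP, maximizing $\langle\beta,z\rangle$ over mass-$1$ simplicial cycles; these agree by the Hahn--Banach/LP duality you cite, and both are polynomial-size.
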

\begin{proof}
  The quantity $\comass_\Delta(\beta)$ is the \emph{simplicial comass}, that is,
  the supremum of $\langle\beta,z\rangle$ over simplicial cycles $z$ of mass 1.

  Clearly, the simplicial comass is less than or equal to the usual comass.
  Conversely, the Federer--Fleming inequality allows us to push any cycle to a
  simplicial one while only increasing the mass by a constant factor $C(m,n)$.
  This gives the second inequality.

  It remains to show that the simplicial comass is polynomially computable.  In
  fact, it is a linear programming problem: given a simplicial representative
  $b \in C^n(X;\mathbb{R})$ of $\beta$,
  $$\text{maximize }a(z)\quad\text{subject to}\quad
  \mass(z) \leq 1,\quad \partial z=0.$$
  We now give a bit more detail as to the implementation.  We introduce a
  variable $z_p \geq 0$ for each \emph{oriented} $n$-simplex $p$ of $X$ (that is,
  two for each unoriented simplex).  The $p$-coefficient of the chain $z$ is then
  given by $z_p-z_{p_-}$, where $p_-$ is the same simplex with opposite
  orientation.

  Then the constraint $\mass(z) \leq 1$ can be written as
  $$\vol(\Delta^n)\sum_p z_p \leq 1,$$
  since every chain has a representative where for every $n$-simplex $p$ either
  $z_p$ or $z_{p_-}$ is zero.  The constraint $\partial z=0$ can be written with
  an equation for each $(n-1)$-simplex of $X$.

  The total number of variables and constraints is linear in $|X|$; hence the
  computation is polynomial in $|X|$.
\end{proof}

Suppose that $\alpha \in [X, S^n]$.  In polynomial time, we can approximate the
comass of $\alpha^*[S^n]$ up to a constant factor.  Suppose that $n$ is
odd or $\dim X \le 2n-2$, so that we can apply Theorem \ref{lip=comass}.  If in
addition $\|\alpha^*[S^n]\|_{\text{comass}} \ge 1$, then Theorem \ref{lip=comass}
guarantees that $\|\alpha^*[S^n]\|_{\text{comass}}^{1/n}$ agrees with
$\Lip(\alpha)$ up to a constant factor $C(\dim X, n)$.  This gives a
significant class of examples in which we can approximate $\Lip(\alpha)$ up
to a constant factor in polynomial time.

\section{Hardness of approximation} \label{S:hard}

In the last two sections, we have given some conditions when it is easy to
estimate $\Lip(\alpha)$ for a given $\alpha \in [X, S^n]$.  We will use that work
as a tool to eventually give an example where it is hard to compute
$\Lip(\alpha)$.  In this section, we prove our first hardness of approximation
result, for a slightly different quantity.

For a simplicial complex $X$, we define $L_{\text{HNT}}(X)$ to be the smallest
Lipschitz constant of a map $f$ from $X$ to the unit $n$-sphere so that
$f^*[S^n]$ is non-zero in $H^n(X; \RR)$.  (HNT stands for ``homologically
non-trivial''.)

\begin{thm} \label{hardLHNT}
  Let $n \ge 2$.  Let $X$ denote a simplicial complex of dimension $(n+1)$.  It
  is $\mathsf{NP}$-hard to approximate $L_{\text{HNT}}(X)$ to within a factor of
  $N^{C/\log\log N}$, where $N$ is the number of simplices in $X$.  
\end{thm}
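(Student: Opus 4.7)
The plan is to reduce from Dinur's $\mathsf{NP}$-hardness of approximating the shortest vector problem (SVP) in the $\ell^\infty$ norm, whose hardness factor is $d^{c/\log\log d}$ in dimension $d$. Given an SVP instance---a lattice $L \subset \ZZ^d$ specified by a basis---I plan to construct an $(n+1)$-dimensional simplicial complex $X$ of size $N = \poly(d)$ such that $H^n(X;\ZZ)$ is naturally identified with $L$ inside $\ZZ^d$, and the comass norm on $H^n(X;\RR)$ agrees, up to a constant depending only on $n$, with the restriction of the ambient $\ell^\infty$ norm. Combining this with Theorem~\ref{lip=comass}---which identifies $\Lip(\alpha)$ with $\|\alpha^*[S^n]\|_{\comass}^{1/n}$ up to a constant whenever the comass is at least $1$---and checking that every class in $H^n(X;\ZZ)$ is in fact realized by an honest map $X \to S^n$, will turn SVP hardness into hardness for $L_{\text{HNT}}(X)$: taking $n$-th roots and using $\log N \asymp \log d$ yields the claimed factor $N^{c'/\log\log N}$ after absorbing $n$ into the constant.

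To construct $X$, I would first rewrite $L$ (using standard integer-lattice routines) as the kernel of a polynomial-size integer matrix $M\colon \ZZ^d \to \ZZ^{d-k}$. Build a CW complex with a single vertex, $d$ top $n$-cells $e_1,\ldots,e_d$ attached trivially (so $X^{(n)} = \bigvee_d S^n$, each sphere triangulated with a fixed bounded number $T_0$ of $n$-simplices), and one $(n+1)$-cell per row of $M$, attached via a simplicial map $S^n \to X^{(n)}$ of degree $M_{ji}$ on the $i$-th sphere. Such attaching maps are triangulable with $\poly(\|M\|_\infty)$ simplices apiece. A cellular computation gives $H^n(X;\ZZ) = \ker M$, and obstruction theory shows every class lifts: the obstruction to extending a map $X^{(n)} \to S^n$ of class $\alpha$ over the $j$-th $(n+1)$-cell is $\langle \alpha, M_j \rangle \in \ZZ$, which vanishes exactly when $\alpha \in \ker M$.

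For the norm equivalence, the lower bound $\|\alpha\|_{\comass} \gtrsim \|\alpha\|_\infty / T_0$ follows by pairing any simplicial cocycle representative $\hat z$ of $\alpha$ against the fundamental cycle of the sphere $S^n_i$ on which $|\alpha_i|$ is maximal: the pairing is exactly $\alpha_i$, while the mass of that fundamental cycle is $T_0 \vol(\Delta^n)$. For the upper bound, the cochain that places $\alpha_i / T_0$ uniformly on the triangles of each sphere $i$ is a cocycle on $X^{(n)}$ and extends to a cocycle on $X$ with only a bounded-factor increase in $\ell^\infty$ by filling into each $(n+1)$-cell---a ball with trivial $H^n$---using the bounded-Lipschitz-extension machinery already at work in the proof of Theorem~\ref{lip=comass}. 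Rescaling $M$ by a bounded integer to guarantee $\|\alpha\|_\infty \geq T_0$ for the shortest nonzero vector makes the comass at least $1$, so both directions of Theorem~\ref{lip=comass} are effective and $L_{\text{HNT}}(X) \asymp (\min_{\alpha \in \ker M \setminus 0} \|\alpha\|_\infty)^{1/n}$ up to factors depending only on $n$. (For $n=2$, where $\dim X = 2n-1$, the weaker second half of Theorem~\ref{lip=comass} still delivers an upper bound on $\Lip(\alpha)$ for each realizable cohomology class, which is all that $L_{\text{HNT}}$ requires.)

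The main obstacle will be the norm-equivalence claim on the full simplicial complex rather than just on its $n$-skeleton: the extra simplices inside each $(n+1)$-cell introduce new $(n-1)$-cochains whose coboundaries could, a priori, give strictly cheaper representatives of nonzero classes in $H^n$ than the CW picture predicts. The way to rule this out is an LP-duality argument built on the simplicial comass formulation of Section~\ref{S:comass}, together with the observation that on each triangulated closed $n$-sphere $S^n_i$ the coboundary of any $(n-1)$-cochain sums to zero over the top simplices. This forces the average value of any representative on $S^n_i$ to equal $\alpha_i / T_0$, producing the needed lower bound uniformly.
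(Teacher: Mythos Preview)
Your plan---reduce from Dinur's $\ell^\infty$-SVP, build an $(n+1)$-complex whose $H^n$ carries the lattice with comass comparable to $\ell^\infty$, then invoke Theorem~\ref{lip=comass}---is the paper's strategy, but your construction differs and has two real gaps. First, a sublattice $L\subset\ZZ^d$ is the kernel of an integer matrix only when $\ZZ^d/L$ is torsion-free; Dinur's hard instances are given by a basis, and there is no ``standard routine'' converting to kernel form while preserving the $\ell^\infty$-shortest vector (passing to the saturation can only shorten it, which destroys the gap in case~(b)). The paper sidesteps this by realizing $\Gamma$ through its basis rather than as a kernel: one sphere $\Sigma_j$ per generator $\mathbf{u}_j$, so $H^n(X;\ZZ)\cong\ZZ^{M+1}\cong\Gamma$ tautologically, together with a second family of $n$-spheres $S_i$---one per ambient coordinate of $\ZZ^N$---attached by mapping cylinders of degree $(n+1)u_{ji}$ to detect the $\ell^\infty$ norm.

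Second, and more seriously, your comass \emph{upper} bound is not established. Extending the candidate cochain from $\bigvee_i S^n_i$ into a triangulated $(n+1)$-cell $D_j$ as a cocycle amounts to producing an $\ell^\infty$-bounded primitive in $C^{n-1}(\partial D_j;\RR)$ for the pulled-back cocycle; the best constant here is an isoperimetric constant of the triangulation of $\partial D_j$, and for a naive pinch-style attaching map (a string of degree-$+1$ pieces followed by degree-$-1$ pieces) it grows with $\lVert M\rVert_\infty$. Your appeal to ``the machinery in the proof of Theorem~\ref{lip=comass}'' is circular: that proof \emph{starts} from a comass hypothesis and uses Hahn--Banach to produce the small-$\ell^\infty$ representative $\hat z$, not the other way around. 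Your final paragraph also misdiagnoses the obstacle---it worries that extra coboundaries might give cheaper representatives, but that concerns the \emph{lower} bound, which is the easy direction already settled by pairing with a single $[S^n_i]$. The paper's construction avoids any filling estimate because the coordinate spheres $S_i$ are themselves honest $n$-cycles of small mass in $X$: every simplicial cycle of mass~$1$ is homologous to a combination of the $S_i$ and $\Sigma_j$ with total coefficient at most $1/(n+1)$, yielding a closed formula for $\comass_\Delta(\beta)$.
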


Let us say something about the proof before we begin the details. 
In the family of examples $X$ we construct,
\begin{equation} \label{big-comass} \tag{$*$}
  L_{\text{HNT}}(X) \sim \min\bigl\{\|\beta\|_{\text{comass}}^{1/n} \mid
  \text{all non-torsion }\beta \in H^n(X; \ZZ)\bigr\}.
\end{equation}
Estimating this minimum is a cousin of the shortest vector problem.  Dinur
\cite{Dinur} proved that a version of the shortest vector problem in $L^\infty$ is
$\mathsf{NP}$-hard to approximate.  Using her theorem, we will show that
$L_{\text{HNT}}$ is also $\mathsf{NP}$-hard to approximate.

Actually to get \eqref{big-comass}, we only need that every non-torsion
$\beta \in H^n(X;\ZZ)$ has comass at least some constant (in our case $1/(n+1)$.)
Then Theorem \ref{lip=comass} tells us that for individual $\alpha \in [X,S^n]$,
$\Lip(\alpha) \sim \|\alpha^*[S^n]\|_{\text{comass}}$; moreover, since $X$ is
$(n+1)$-dimensional, every $\beta \in H^n(X; \ZZ)$ is represented by some
$\alpha \in [X, S^n]$.

Eventually, we will prove that it is hard to approximate $L_{\neq 0}(\Sigma)$ and
$L_1(\Sigma)$, where $\Sigma$ is a triangulated manifold.  As a bridge to get to
that result, we will also prove that two cousins of $L_{\text{HNT}}(X)$ are hard to
approximate.  Suppose that $h \in H_n(X, \ZZ)$.  We define $L_{\neq 0}(X,h)$ to be
the smallest Lipschitz constant of a map $f: X \rightarrow S^n$ so that
$f^*[S^n](h) \neq 0$.  Similarly, we define we define $L_1(X,h)$ to be the
smallest Lipschitz constant of a map $f:X \rightarrow S^n$ so that
$f^*[S^n](h)=1$.

\begin{thm} \label{hardLXh}
  Let $n \ge 2$.  Let $X$ denote a simplicial complex of dimension $(n+1)$ and
  $h$ denote a homology class in $H_n(X, \ZZ)$.  It is $\mathsf{NP}$-hard to
  approximate $L_{\neq 0}(X,h)$ or $L_1(X,h)$ to within a factor of
  $N^{C/\log\log N}$, where $N$ is the number of simplices in $X$.  In fact, it is
  $\mathsf{NP}$-hard to distinguish the case where $L_{\neq 0}(X, h) \sim 1$ from
  the case where $L_{\neq 0}(X,h) \ge N^{C/\log \log N}$, and similarly for $L_1$.
\end{thm}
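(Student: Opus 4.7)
The plan is to deduce Theorem \ref{hardLXh} from Theorem \ref{hardLHNT} by a polynomial-time Turing reduction. For $L_{\neq 0}$: given a hard instance $X$ of $L_{\text{HNT}}$, I first compute, via Smith normal form on the simplicial boundary maps, a basis $h_1,\dots,h_D$ of the torsion-free quotient $H_n(X;\ZZ)/\text{torsion}$ in polynomial time, with $D\leq N$. The duality pairing $H^n(X;\RR)\otimes H_n(X;\RR)\to\RR$ is non-degenerate, so an integer class $\beta\in H^n(X;\ZZ)$ is non-torsion if and only if $\beta(h_i)\neq 0$ for at least one $i$. This gives the identity
$$L_{\text{HNT}}(X) \;=\; \min_{1\leq i\leq D} L_{\neq 0}(X,h_i),$$
and hence any algorithm approximating $L_{\neq 0}(X,h)$ within a factor of $N^{C/\log\log N}$ for every $(X,h)$ yields the same approximation for $L_{\text{HNT}}(X)$ after $D=\poly(N)$ oracle calls. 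The hardness therefore transfers, settling the $L_{\neq 0}$ half of the theorem.

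For $L_1(X,h)$ the inequality $L_1(X,h)\geq L_{\neq 0}(X,h)$ is immediate, so the NO direction of the promise problem (the case $L\geq N^{C/\log\log N}$) carries over unchanged from the previous reduction. The YES direction is the delicate point: we need some $(X,h)$ among our reduction outputs to admit a map $f$ with $\Lip(f)\lesssim 1$ and $f^*[S^n](h)=1$, not merely non-zero. My plan is to use primitivity. In a YES instance the witnessing class $\beta^*\in H^n(X;\ZZ)$ may be assumed primitive (divide by its content, which can only decrease the comass), and a primitive integer class always admits some $h\in H_n(X;\ZZ)$ with $\beta^*(h)=\pm 1$. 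If this $h$ can be taken to be a basis element $h_i$ of $H_n(X;\ZZ)/\text{torsion}$, then $L_1(X,h_i)\lesssim L_{\text{HNT}}(X)$, matching the lower bound $L_1\geq L_{\neq 0}$ and closing the reduction.

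The main obstacle lies precisely here: guaranteeing that the SVP-style hardness instance can be engineered so that the (unknown) optimal short vector $\beta^*$ has a distinguished coordinate equal to $\pm 1$ in an a priori basis of $H^n(X;\ZZ)$. If the basic Dinur-style lattice underlying Theorem \ref{hardLHNT} does not automatically have this property, I would modify the construction by appending a small gadget---a single extra $n$-cell together with a bounding $(n+1)$-cell attached in a prescribed way---producing a canonical new $h\in H_n(X;\ZZ)$ that is paired to $\pm 1$ with every nontrivial short class. This is analogous to the standard lattice reduction from $\ell^\infty$-SVP to its closest-vector or fixed-coordinate variants, which preserves the Dinur hardness gap. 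The passage from comass back to the Lipschitz norm is then supplied by Theorem \ref{lip=comass} and the linear programming from Section \ref{S:comass}, so the $N^{C/\log\log N}$ gap propagates through without loss.
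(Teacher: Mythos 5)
Your reduction for the $L_{\neq 0}$ half is correct as far as it goes, but note what it actually proves: the identity $L_{\text{HNT}}(X)=\min_i L_{\neq 0}(X,h_i)$ gives a \emph{Turing} reduction with $D$ oracle calls, which shows that a polynomial-time approximation algorithm for $L_{\neq 0}(X,h)$ would collapse $\mathsf{P}$ and $\mathsf{NP}$, but it does not produce a single pair $(X,h)$ realizing the gap. The second sentence of the theorem (``$\mathsf{NP}$-hard to distinguish $L_{\neq 0}(X,h)\sim 1$ from $L_{\neq 0}(X,h)\ge N^{C/\log\log N}$'') is a many-one gap statement, and it is used in exactly that form in Sections 5 and 6, where a \emph{single} $(X,h)$ is fed into the surface/sphere construction. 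In a YES instance your reduction only tells you that \emph{some} unknown $h_i$ satisfies $L_{\neq 0}(X,h_i)\sim 1$, so the downstream argument would have to be reworked into a Turing reduction as well. The paper avoids this entirely by exhibiting one explicit class, $h=[\Sigma_0]$, that works in both cases.

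The genuine gap is in the $L_1$ half, and you have correctly located it but not closed it. A fixed class $h$ that ``is paired to $\pm 1$ with every nontrivial short class'' cannot exist in general: distinct short classes pair differently with any single $h$, and choosing $h$ adapted to the (unknown) optimal $\beta^*$ is circular, since finding $\beta^*$ is the hard problem. Nor is the passage from gap-$\mathrm{SVP}_\infty$ to the variant where the short vector has a prescribed unit coordinate a ``standard'' gap-preserving reduction that can be supplied by a generic gadget; it is precisely the nontrivial refinement the paper extracts from the internals of Dinur's proof (Theorem \ref{latticesref}, via her Proposition 22): in the YES case the short vector can be taken of the form $\mathbf{u}_0+\sum_{j\ge 1}a_j\mathbf{u}_j$ with $a_j\in\{0,1\}$, i.e.\ with coefficient exactly $1$ on the distinguished generator $\mathbf{u}_0$. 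With that structural guarantee, $h=[\Sigma_0]$ is a canonical, a priori computable choice for which the YES witness satisfies $\beta^*(h)=1$, settling both $L_1$ and the many-one gap version of $L_{\neq 0}$ simultaneously. Without citing (or reproving) this refinement, your YES direction for $L_1$ does not go through.
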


Now we begin the proofs.  All of the results are based on a hardness of
approximation result for shortest vectors in lattices in $L^\infty$.

\begin{thm} \label{lattices} (Dinur, \cite{Dinur})
  For a subgroup $\Gamma \subset \mathbb{Z}^N$ specified by generators of size
  polynomial in $N$, it is $\mathsf{NP}$-hard to approximate the smallest length
  of a non-zero vector in $\Gamma$ to within $O(N^{C/\log \log N})$.
\end{thm}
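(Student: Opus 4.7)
The plan is to follow Dinur's reduction \cite{Dinur}; since the $N^{c/\log\log N}$ gap is far larger than what any direct PCP-style reduction yields, the main content is a gap-amplification construction on top of a basic $\mathsf{NP}$-hardness reduction. I do not expect to reprove the classical building blocks from scratch, but I will describe how they fit together.

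First I would begin with a known $\mathsf{NP}$-hardness result as the seed: either the classical $\mathsf{NP}$-hardness of exact $L^\infty$-SVP (van Emde Boas), or constant-factor inapproximability for a convenient combinatorial problem admitting a lattice encoding, such as a bounded-occurrence constraint satisfaction problem, hypergraph independent set, or a label-cover variant. The starting gap here need only be a fixed constant strictly greater than one.

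Second, I would encode an instance as a lattice $\Gamma \subset \mathbb{Z}^N$ whose generators are chosen so that (a) every satisfying assignment of the combinatorial instance yields a non-zero lattice vector whose $L^\infty$ norm is $O(1)$, and (b) in a ``NO'' instance, every non-zero vector of $\Gamma$ has at least one coordinate of absolute value at least the gap parameter. The generators naturally split into two kinds: variable-indexed vectors carrying the candidate assignment, and constraint-indexed vectors whose role is to force any lattice element to actually obey the combinatorial constraints (with the size of the violation appearing as a large coordinate in $L^\infty$). This step is a relatively standard adaptation of the classical reductions.

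The main obstacle, and where Dinur's paper is genuinely new, is the \emph{gap amplification}: boosting a constant-factor hardness up to $N^{c/\log\log N}$. Her approach iterates a tensor-product-like operation on lattice instances, where each iteration multiplies the gap by a significant factor while keeping the lattice dimension and generator bit-length under control. Iterating $\Theta(\log\log N)$ times produces the claimed almost-polynomial gap. The delicate bookkeeping is to verify that the amplified construction still has integer generators of polynomial size and that the gap survives the product. Once this boosted gap version is in hand, the lattice construction of the previous paragraph, applied to the amplified instance, finishes the proof.
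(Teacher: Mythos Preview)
The paper does not prove this theorem; it is quoted from Dinur and only sketched in \S4.1.  That sketch, however, differs substantially from yours.

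According to the paper, Dinur's reduction is a two-step affair $\mathrm{SAT} \to \mathcal{SSAT}_\infty \to \mathrm{SVP}_\infty$, and the entire $N^{c/\log\log N}$ gap is created in the \emph{first} step, directly from $\mathrm{SAT}$.  The key idea highlighted is \emph{linear relaxation}: one replaces a satisfying assignment of each clause by a formal $\mathbb{Z}$-linear combination of satisfying assignments, so that the set of consistent assignments already forms a lattice and the norm is a mixed $\ell^1/\ell^\infty$ quantity.  The hard part is then to ensure that ``fake'' (non-boolean) solutions necessarily have large norm, which Dinur accomplishes with an error-correcting-code construction using algebra over finite fields.  The second step, $\mathcal{SSAT}_\infty \to \mathrm{SVP}_\infty$, is merely cosmetic.

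Your outline instead posits a constant-gap seed (from van Emde Boas or a CSP) followed by $\Theta(\log\log N)$ rounds of a ``tensor-product-like'' amplification on lattice instances.  This is a different paradigm, and it is not the one the paper attributes to Dinur: there is no iteration on lattices, and the seed is $\mathrm{SAT}$ itself, not a constant-factor $\mathrm{SVP}$ instance.  More importantly, naive tensoring of lattices does not behave well with respect to the $\ell^\infty$ shortest-vector length, so it is unclear your amplification step would go through at all; this is precisely why Dinur routes through the intermediate problem $\mathcal{SSAT}_\infty$ and uses coding-theoretic gadgets rather than lattice products.  The central mechanism you are missing is the linear-relaxation-of-$\mathrm{SAT}$ idea together with the finite-field code that enforces the gap.
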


This would be enough to prove Theorem \ref{hardLHNT}, but we need a small
refinement to prove Theorem \ref{hardLXh}.  This refinement was also proven in
\cite{Dinur}: it follows from inspecting the proof of Theorem \ref{lattices}, and
in particular condition (a) appears in the proof of Proposition 22 of the paper.

\begin{thm} \label{latticesref}
  Consider a lattice $\Gamma \subset \ZZ^N$ with basis
  $\mathbf{u}_0, \mathbf{u}_1, \ldots, \mathbf{u}_M$, with
  $\|\mathbf{u}_j\|_\infty$ at most polynomial in $N$.  It is $\mathsf{NP}$-hard
  to distinguish the following two cases:
  \begin{enumerate}[(a)]	
  \item There is a vector $\mathbf{v} \in \Gamma$ with $\|\mathbf{v}\|_\infty=1$
    of the form $\mathbf{v}=\mathbf{u}_0+\sum_{j=1}^M a_j\mathbf{u}_j$, where
    $a_j \in \{0, 1\}$.
  \item There is no nonzero vector $\mathbf{v}$ in the subgroup with
    $\|\mathbf{v}\|_\infty \leq N^{C/\log \log N}$.
  \end{enumerate}
\end{thm}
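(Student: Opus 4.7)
The plan is to derive Theorem \ref{latticesref} as a refinement of Theorem \ref{lattices} rather than to reprove the hardness from scratch. The soundness clause (b) is already the statement of Theorem \ref{lattices} applied to the lattice produced by Dinur's reduction, so the entire content of the refinement lies in exhibiting, in the completeness case, the short witness vector with the prescribed $\{0,1\}$-combination structure relative to a fixed basis.

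I would approach this by tracing through the two ingredients of Dinur's construction. The starting point is a reduction from an $\mathsf{NP}$-hard gap CSP (a gap version of SAT) to a ``nearest codeword''-type lattice problem; here the bits of a satisfying assignment appear naturally as $\{0,1\}$-coefficients of certain generators, and a distinguished generator $\mathbf{u}_0$ plays the role of the target vector so that the witness is $\mathbf{u}_0+\sum a_j\mathbf{u}_j$ of $\ell^\infty$-norm exactly $1$. The second ingredient is a tensor-product amplification that boosts the inapproximability factor to $N^{C/\log\log N}$. The crucial observation is that the $\{0,1\}$-combination form is preserved under this amplification: lifting an assignment-witness through the tensor power produces a witness of the same $\{0,1\}$-combination form relative to the tensor basis (using a distinguished product vector as the new $\mathbf{u}_0$), and its $\ell^\infty$-norm remains $1$.

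The main ``obstacle,'' which is really bookkeeping rather than a genuine difficulty, is to verify that the distinguished generator $\mathbf{u}_0$ and the assignment-indexed generators $\mathbf{u}_j$ can be simultaneously identified as part of a single basis of $\Gamma$ throughout Dinur's construction, and that the norm of the resulting combination is exactly $1$ (rather than merely bounded by a constant). Both points are recorded in the proof of Proposition 22 of \cite{Dinur}, so for our purposes no new argument beyond a careful reading is required; we simply state the refined version as a black box to be fed into our topological reduction.
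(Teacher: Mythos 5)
Your approach coincides with the paper's: Theorem \ref{latticesref} is not reproved from scratch but obtained by inspecting Dinur's proof of Theorem \ref{lattices}, with the $\{0,1\}$-combination structure of the witness in case (a) read off from the proof of Proposition 22 of \cite{Dinur}, exactly as you propose. One caveat on your sketch of Dinur's internals: the almost-polynomial gap for $\mathrm{SVP}_\infty$ does not come from a tensor-product amplification of the lattice; it is already built into the first reduction, from $\mathrm{SAT}$ to the intermediate problem $\mathcal{SSAT}_\infty$, via finite-field/error-correcting-code techniques, and the second reduction to $\mathrm{SVP}_\infty$ merely preserves the gap while converting the mixed $\ell^1$/$\ell^\infty$ norm into the $\ell^\infty$-norm. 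Since you treat Proposition 22 as a black box anyway, this mischaracterization does not affect the validity of the citation-based argument, but it would matter if you actually had to verify that the $\{0,1\}$-witness form survives the amplification step you describe.
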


Now we begin the proof of Theorems \ref{hardLHNT} and \ref{hardLXh}.  We start
with a lattice $\Gamma \in \ZZ^N$ with basis
$\mathbf{u}_0,\mathbf{u}_1,\ldots,\mathbf{u}_M$ as in Theorem
\ref{latticesref}.  We use this lattice to build an (n+1)-dimensional complex $X$
with an isomorphism $\gamma: \Gamma \rightarrow H^n(X; \ZZ)$, and so that the
$l^\infty$ norm on $\Gamma$ is closely related to the simplicial comass norm on
$H^n(X; \ZZ)$.  More precisely, we will show that
\begin{enumerate} [(a)]
\item If $\mathbf v \in \Gamma$ has the form $\sum_{j=0}^M a_j \mathbf u_j$ with
  $a_j \in \{0,1\}$, then
  $$\|\mathbf v\|_\infty=\comass_\Delta(\gamma(\mathbf v)).$$
\item For any $\mathbf v \in \Gamma$,
  $\comass_\Delta(\gamma(\mathbf v)) \geq \|\mathbf v\|_\infty$.
\end{enumerate}

Here is the construction of the complex $X$.  We start with a wedge of
$n$-spheres $\Sigma_0,\Sigma_1,\ldots,\Sigma_M$, each triangulated as
$\partial\Delta^{n+1}$.  For each coordinate of $\mathbb{Z}^N$, $i=1,\ldots, N$,
we attach a $n$-sphere $S_i$, also triangulated as $\partial\Delta^{n+1}$,
together with a triangulated mapping cylinder between $S_i$ and a map
$S^n \to \bigvee_{j=0}^M \Sigma_j$ in the homotopy class
$$\sum_{j=0}^M (n+1)u_{ji}\id_{\Sigma_j},\qquad
\text{where }\mathbf u_j=(u_{j0},\ldots,u_{jN}) \in \ZZ^N.$$

The complex $X$ deformation retracts to $\Sigma_0 \vee \cdots \vee \Sigma_M$, so
in particular $H^n(X;\ZZ)$ is the free abelian group generated by $[\Sigma_j]^*$.
We define $\gamma$ so that $\gamma(\mathbf u_j)=[\Sigma_j]^*$.  

Next we have to estimate the comass norm.  Let $\beta \in H^n(X;\mathbb{Z})$
satisfy $\beta(\Sigma_j)=\beta_j$; in other words,
$\beta=\gamma(\sum_{j=0}^M \beta_j\mathbf u_j)$.  Then
$$\beta([S_i]/(n+1))=\sum_{j=1}^M \beta_ju_{ji}$$
and therefore
$$\comass_\Delta(\beta) \geq
\sup_{1 \leq i \leq N} \biggl\lvert \sum_{j=1}^M \beta_ju_{ji} \biggr\rvert
=\biggl\lVert \sum_{j=1}^M \beta_j\mathbf u_j \biggr\rVert_\infty.$$
Conversely, if $z \in C_n(X;\mathbb{R})$ is a simplicial cycle of mass $1$, then
it is homologous to a combination of $S_i$'s and $\Sigma_j$'s with coefficients
summing up to at most $\frac{1}{n+1}$.  Then
$$\comass_\Delta(\beta)=\max\biggl\{\sup_j \frac{\beta_j}{n+1},
\sup_i \biggl\lvert\sum_{j=1}^M \beta_ju_{ji}\biggr\rvert\biggr\},$$
and if the $\beta_j$ are $0$ or $1$, the term on the right is larger.

Using Theorem \ref{latticesref} and our estimate connecting the comass of
$\gamma(\mathbf v)$ with the $l^\infty$ norm of $\mathbf v$ in $\ZZ^n$, we see
that it is $\mathsf{NP}$-hard to distinguish the following two cases:
\begin{enumerate} [(a)]	
\item There is a non-zero class $\beta \in H^n(X; \ZZ)$ with
  $\comass_\Delta(\beta) = 1$ and $\beta$ of the form
  $[\Sigma_0]^* + \sum_{j=1}^M a_j[\Sigma_j]^*$ with $a_j \in \{0, 1\}$.
\item For every non-zero $\beta \in H^n (X, \ZZ)$,
  $\comass_\Delta(\beta) \ge N^{C/\log \log N}$.	
\end{enumerate}

Since $X$ is homotopy equivalent to a wedge of $n$-spheres,
$[X, S^n] = H^n(X; \ZZ)$.  For each $\alpha \in [X, S^n]$, there is a unique
$\mathbf v_\alpha \in \Gamma$ so that $\gamma(\mathbf v_\alpha)=\alpha^*[S^n]$.  We
apply Theorem \ref{lip=comass} to estimate $\Lip(\alpha)$.  If
$\alpha \neq 0$, then we know that
$$\|\alpha^*[S^n]\|_{\text{comass}} \ge \|\mathbf v_\alpha\|_\infty \ge 1.$$
Theorem \ref{lip=comass} tells us that
$\Lip(\alpha) \sim \|\alpha^*[S^n]\|_{\text{comass}}^{1/n}$.  Therefore, we see that
$$L_{\text{HNT}}(X)^n \sim \min_{0 \neq \beta \in H^n(X; \ZZ)} \|\beta\|_{\text{comass}}.$$
But it is $\mathsf{NP}$-hard to distinguish the case when the right-hand side is
$\sim 1$ from the case when the right-hand side is $\ge N^{C/\log \log N}$.  This
shows Theorem \ref{hardLHNT}.

To prove Theorem \ref{hardLXh}, let $X$ be the same complex and let
$h=[\Sigma_0]$.  Then
$$L_{\neq 0}(X,h)^n \sim
\min_{\beta=\sum_{j=0}^M a_j[\Sigma_j]^*, a_0 \neq 0}\|\beta\|_{\text{comass}},$$
and
$$L_1(X,h)^n \sim
\min_{\beta=\sum_{j=0}^M a_j[\Sigma_j]^*, a_0=1} \|\beta\|_{\text{comass}}.$$

In both cases, we have seen that it is $\mathsf{NP}$-hard to distinguish the case
when the right-hand side is $\sim 1$ from the case when the right-hand side is
$\ge N^{C/\log \log N}$.  This shows Theorem \ref{hardLXh}.

\subsection{On the proof of Dinur's result}

For the sake of completeness, we give an idea of Dinur's proof of Theorem
\ref{lattices}.  This is a reduction which is performed in two steps.  The first
step reduces $\mathrm{SAT}$ to an intermediate optimization problem
$\mathcal{SSAT}_\infty$ (which is somewhat complicated to define) and the second
reduces $\mathcal{SSAT}_\infty$ to $\mathrm{SVP}_\infty$, the problem of deciding
the length of the $\ell^\infty$-shortest nonzero vector in a lattice.  The first
reduction creates a gap, that is, any satisfiable instance of $\mathrm{SAT}$
produces an instance of $\mathcal{SSAT}_\infty$ whose smallest solution has norm
1, and any unsatisfiable instance produces an instance of $\mathcal{SSAT}_\infty$
whose smallest solution has norm $\geq g \sim N^{c/\log \log N}$; and the second
reduction maintains this gap.

The second reduction, whose particulars were used to state Theorem
\ref{latticesref}, is essentially an act of tidying up: as we'll see, the
solution set to an instance of $\mathcal{SSAT}_\infty$ already looks like a
lattice, and the reduction adds an extra degree of freedom and massages the
vectors so that the norm in $\mathcal{SSAT}_\infty$ (a complicated combination of
$\ell^1$ and $\ell^\infty$) is translated into the $\ell^\infty$-norm.

The first reduction is much more complicated, but the core idea is that of
\emph{linear relaxation}.  That is, think of an instance of $\mathrm{SAT}$ as a
constraint satisfaction problem, where each clause is a constraint: is there a
choice of assignments for each clause such that all the variables have the same
value each time?  What Dinur now does is \emph{relax} this problem to get a new
problem $\mathcal{SSAT}_\infty$.  An instance of $\mathcal{SSAT}_\infty$ has the
same input as an instance of $\mathrm{SAT}$, but an allowable assignment for each
clause is a formal $\mathbb{Z}$-linear combination of satisfying assignments.
The value of a variable in the clause is defined as the projection of this vector
to a formal linear combination of variable assignments.  For example, if there
are two variables $x$ and $y$ and two clauses, then
\begin{align*}
  \text{Clause 1: } & \{x=T,y=T\}+\{x=F,y=F\} \\
  \text{Clause 2: } & \{x=T,y=F\}+\{x=F,y=T\}
\end{align*}
is a consistent assignment, because the value of each variable in each clause is
$T+F$.  Despite this extra flexibility, however, a consistent assignment in
which each clause vector has $\ell^1$-norm 1 corresponds to a genuine solution to
the corresponding instance of $\mathrm{SAT}$.  Therefore, the natural norm on
allowable assignments is the maximum over all clauses of the $\ell^1$-norm of the
clause vector, and the set of all consistent assignments forms a lattice in the
vector space spanned by all satisfying assignments of every clause.

The remaining and most complicated ingredient is introducing the gap: how to make
it so that the ``fake'' solutions have very large norm even when the original
$\mathrm{SAT}$ instance is very close to being satisfiable.  This is done using
something analogous to an error-correcting code, via applications of algebra on
finite fields.

\section{The proof of the main theorem for $n=2$}

In this section, we show that it is hard to approximate $L_{\neq 0}(\Sigma)$ or
$L_1(\Sigma)$ when $\Sigma$ is a triangulated surface (possibly with high genus).
We connect this problem to the hardness of approximation results in the last
section by adapting a method given by the second author in \cite[\S5]{GuthSurf}.
Essentially, we show that given a complex $X$ as in the previous section and a
homology class $h \in H_2(X,\ZZ)$, we can construct a triangulated surface
$\Sigma \subset X$ in the homology class $h$, so that the metric on $\Sigma$
closely imitates the metric on $X$.  Then $L_{\neq 0}(\Sigma)$ will be closely
related to $L_{\neq 0}(X, h)$, which is $\mathsf{NP}$-hard to approximate.

By ``closely imitates'', we mean the following.  Given $\epsi>0$, we say that the
$\epsi$-\emph{girth} of a map $p:\Sigma \to X$ is $\delta$ if its image is
$\epsi$-dense in $X$ and the preimages of $2\epsi$-balls have diameter at most
$\delta$.\footnote{This is slightly different from the definition in
  \cite{GuthSurf}.}  It turns out that this condition is sufficient for being
able to extend maps from $\Sigma$ to $X$:
\begin{lem} \label{lem:girth}
  Suppose that $X$ has a cover by $\epsi$-balls with Lebesgue number $\epsi/2$
  and multiplicity $\mu$.  If a map $p:\Sigma \to X$ has $\epsi$-girth $\delta$
  and $f:\Sigma \to S^n$ is a $\delta^{-1}$-Lipschitz map to the unit sphere,
  then there is a map $g:X \to S^n$ such that $f \simeq g \circ p$ and
  $\Lip g \leq C(\mu,n)\epsi^{-1}$.
\end{lem}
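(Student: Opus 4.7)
\medskip
\noindent\emph{Proof plan.}  The strategy is to build $g$ as a nerve-type map associated to the given cover $\mathcal{U}=\{B_1,\dots,B_K\}$, using values pulled back through $p$ from $f$, and then barycentrically averaging on $S^n$.

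\textbf{Step 1 (partition of unity).}  Because $\mathcal{U}$ has Lebesgue number $\epsi/2$, standard distance-function tricks produce a partition of unity $\{\psi_i\}$ subordinate to $\mathcal{U}$ with $\Lip\psi_i \leq C_1/\epsi$ and such that at most $\mu$ of the $\psi_i$ are nonzero at any given point.

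\textbf{Step 2 (choosing base values on $S^n$).}  Since $p(\Sigma)$ is $\epsi$-dense, for each $i$ pick $y_i\in p(\Sigma)\cap B_i$, then $\sigma_i\in p^{-1}(y_i)$, and set $v_i:=f(\sigma_i)\in S^n$.

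\textbf{Step 3 (key proximity estimate).}  If $x\in B_i\cap B_j$, then the centers of $B_i$ and $B_j$ lie within $\epsi$ of $x$, so $y_i,y_j\in p(\Sigma)\cap B(x,2\epsi)$.  By the $\epsi$-girth hypothesis, $d_\Sigma(\sigma_i,\sigma_j)\leq \delta$, hence
$$d_{S^n}(v_i,v_j)\leq \delta^{-1}\cdot\delta = 1.$$
In particular, for any $x\in X$ the (at most $\mu$) vectors $v_i$ with $\psi_i(x)>0$ all lie in a ball of radius $1<\pi/2$ on $S^n$.

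\textbf{Step 4 (definition of $g$).}  Regard $v_i\in S^n\subset\RR^{n+1}$ and set
$$\tilde g(x):=\sum_i \psi_i(x)\,v_i,\qquad g(x):=\tilde g(x)/|\tilde g(x)|.$$
By Step 3, at each $x$ the vector $\tilde g(x)$ is a convex combination of points in an open hemisphere around any chosen $v_{i_0}$, so $\tilde g(x)\neq 0$ and in fact $|\tilde g(x)|\geq c(n)>0$; thus $g$ is well defined.

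\textbf{Step 5 (Lipschitz bound).}  For $x,x'\in X$, using $\sum_i(\psi_i(x)-\psi_i(x'))=0$,
$$\tilde g(x)-\tilde g(x') = \sum_i(\psi_i(x)-\psi_i(x'))(v_i-v_{i_0}),$$
where $v_{i_0}$ is a fixed value near $x,x'$.  Since at most $2\mu$ terms are nonzero and each $v_i-v_{i_0}$ has norm $\leq 1$, we get $\Lip\tilde g\leq 2\mu C_1/\epsi$.  Normalization is $1/c(n)$-Lipschitz on the relevant region, so $\Lip g\leq C(\mu,n)\epsi^{-1}$.

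\textbf{Step 6 (homotopy).}  For any $\sigma\in\Sigma$, the same argument as in Step 3 (applied with $x=p(\sigma)$, using $\sigma$ in place of one of the $\sigma_j$) shows $d_{S^n}(f(\sigma),v_i)\leq 1$ for every $i$ with $\psi_i(p(\sigma))>0$, hence $d_{S^n}(f(\sigma),g(p(\sigma)))<\pi$.  Thus $f$ and $g\circ p$ may be joined by the straight-line (geodesic) homotopy on $S^n$.

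\medskip
\noindent\emph{Expected difficulty.}  Steps 1--3 are essentially geometric bookkeeping.  The main thing to be careful about is Step 5: one needs the ``subtract a basepoint'' trick so that the Lipschitz estimate picks up only the diameter of the active $\{v_i\}$ (which is $O(1)$) rather than their norms (which are $1$ but would accumulate badly over $\mu$ summands).  Once that is in hand, the dependence $C(\mu,n)\epsi^{-1}$ drops out automatically, and the homotopy in Step 6 is essentially free because everything has been kept within a hemisphere on $S^n$.
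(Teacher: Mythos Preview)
Your proposal is correct and follows essentially the same approach as the paper: choose values $v_i\in f(p^{-1}B_i)$, use the girth hypothesis to show that the active $v_i$'s at any point lie in a unit ball of $S^n$, form a weighted average via a partition of unity with $\Lip \sim \epsi^{-1}$, project back to $S^n$, and use the same closeness estimate to get the linear homotopy to $f$. The only cosmetic difference is that the paper factors the construction explicitly through the nerve $N$ of the cover (defining $\tilde g:N\to S^n$ on vertices and extending affinely over simplices, then composing with the barycentric map $\phi:X\to N$), whereas you write the composition $\tilde g\circ\phi$ directly as $\sum_i\psi_i(x)v_i$ and normalize; these are the same map.
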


Before proving the lemma, we give an example of a map of small girth.  Suppose
that $X$ is a 3-dimensional simplicial complex with equilateral simplices of
sidelength 1.  We build a high genus surface $\Sigma$ and a $1$-Lipschitz map
$p:\Sigma \to X$ as follows.  Let $s>0$ be a small number, and let $\{x_i\}$ be
an $s$-net in $X$.  For each $i$, let $\Sigma_i$ be a 2-sphere of radius $s/10$
and let $p$ project it to $x_i$.  If the distance between $\Sigma_i$ and
$\Sigma_j$ is at most $10s$, then join $\Sigma_i$ and $\Sigma_j$ by a tube of
length at most $10s$ and radius $\sim |X|^{-1/2}s$,\footnote{Here $|X|$ denotes
  the number of simplices of $X$; since we put no restrictions on the
  combinatorics of $X$, the number of facets incident to a vertex may be
  $\sim |X|$.} and let $p$ project the tube to the geodesic between $x_i$ and
$x_j$.  This gives us the surface $\Sigma$ and the map $\sigma$.  We can
triangulate $\Sigma$ with simplices of sidelength $\sim |X|^{-1/2}s$, and the
total number of simplices is $\lesssim s^{-3}|X|^2$.


Clearly $p$ is $\epsi$-dense for any $\epsi>10s$.  Suppose $\epsi>10s$; then any
two points $x_1, x_2$ in $p^{-1}B(x, 2\epsi)$ can be joined by a path in $\Sigma$
of length at most $100\epsi$ as follows.  If the points $x_1, x_2$ are in thin
tubes, we move them to points $x_1', x_2'$ contained in the spheres
$\Sigma_{i_1}, \Sigma_{i_2}$ via paths of length $\lesssim 10s \le \epsi$.  Then
connect $\Sigma_{i_1}$ and $\Sigma_{i_2}$ through a sequence of thin tubes of total
length $\lesssim$ the distance from $\Sigma_{i_1}$ to $\Sigma_{i_2}$, which in turn
is $\le 4\epsi$.  Therefore, the map $p$ has $\epsi$-girth $\delta$ whenever
$\epsi > 10s$ and $\delta > 100\epsi$.


Now we turn to the proof of Lemma \ref{lem:girth}.

\begin{proof}
  Let $\{U_i\}=\{B_\epsi(x_i)\}$ be the assumed cover of $X$.  We first pick a
  partition of unity $\{\phi_i\}$ subordinate to $\{U_i\}$.  This defines a map
  $\phi$ from $X$ to the nerve $N$ of $\{U_i\}$ via barycentric coordinates.
  Choose
  $$\hat\phi_i(x)=\begin{cases}
  1 & 0 \leq d(x_i,x) \leq \epsi/2 \\
  2-2\epsi^{-1}d(x_i,x) & \epsi/2 \leq d(x_i,x) \leq \epsi \\
  0 & \epsi \leq d(x_i,x)
  \end{cases}$$
  and let $\phi_i(x)=\left(\sum_i \hat\phi_i(x)\right)^{-1}\hat\phi_i(x)$.  It is
  not hard to see that with these choices, $\Lip(\phi)=O(\mu^{1/2})\epsi^{-1}$.

  Then we can define a map $\tilde g:N \to S^n$ as follows:
  \begin{itemize}
  \item For each $U_i$, choose a point of $f(p^{-1}U_i)$ and send the
    corresponding vertex of $N$ to that point.
  \item Now for any simplex of $N$, the images of all vertices are contained in a
    1-ball in $S^n$.  Thus we can project to $S^n$ from the affine simplex
    spanned by the vertices; such maps have uniformly bounded Lipschitz constant
    depending only on $n$.
  \end{itemize}
  Defined this way, we can set $g=\tilde g \circ \phi$.  Then for every point
  $x \in \Sigma$, $g \circ p(x)$ is at most 2 units from $f(x)$. 
  Therefore there is a linear homotopy from $g \circ p$ to $f$.
\end{proof}

\begin{rmk}
  This lemma is closely based on Lemma 5.2 in \cite{GuthSurf}, which, however,
  works for a much larger class of target manifolds than just the sphere.  The
  argument above can  probably be generalized, for example by replacing the
  ``simplex flattening'' strategy by barycentric coordinates coming from Karcher
  mean, as in \cite{DVW}.  
\end{rmk}

\begin{proof} [Proof of Theorem \ref{thm:main} for $n=2$.]
  We deal with $L_{\neq 0}$.  The proof for $L_{1}$ is similar.

  Start with $(X,h)$ as in Theorem \ref{hardLXh}, where $X$ is a 3-dimensional
  simplicial complex and $h \in H_2(X, \ZZ)$.  Let $N$ be the number of simplices
  in $X$.  We know that it is $\mathsf{NP}$-hard to distinguish the case that
  $L_{\neq 0}(X,h) \lesssim 1$ from the case that
  $L_{\neq 0} (X,h) \ge N^{c/\log \log N}$.  

  Choose $s = N^{-c/\log \log N}$.  Now given $(X,h)$, we construct a triangulated
  surface $\Sigma_s$ with a map $p:\Sigma_s \rightarrow X$ so that
  $p([\Sigma])=h$ and so that $p$ has $\epsi$-girth $\delta$ for all $\epsi>10s$
  and $\delta>100\epsi$.  The construction is basically the same as the one in
  the paragraph just after the statement of Lemma \ref{lem:girth}.  The only
  problem is that in that description, $[\Sigma] = 0$.  To fix that problem, we
  let $\Sigma_h$ be a cycle in the class $h$.  Then we take $\Sigma_h$ together
  with a dense set of spheres $\Sigma_i$ and join them with thin tubes as above. 

  If $g:X \rightarrow S^n$ with $g^*[S^n](h) \neq 0$, then
  $g \circ p:\Sigma_s \rightarrow S^n$ has nonzero degree and
  $\Lip(g \circ p) \le \Lip(g)\Lip(p) \le \Lip(g)$.  Therefore,
  $L_{\neq 0}(\Sigma_s) \le L_{\neq 0} (X, h)$.  In particular, if
  $L_{\neq 0}(X,h) \lesssim 1$, then $L_{\neq 0}(\Sigma_s) \lesssim 1$ as well.

  On the other hand, suppose that $f:\Sigma_s \rightarrow S^n$ has nonzero
  degree.  If $\Lip(f) \le (1/1000) s^{-1}$, then Lemma 5.1 tells us that $f$ is
  homotopic to a composition $g \circ p$, where $\Lip(g) \lesssim \Lip(f)$.
  Since $f$ is homotopic to $g \circ p$, $g^*[S^n](h) \neq 0$.  Therefore,
  $L_{\neq 0}(\Sigma_s) \gtrsim \min(s^{-1}, L_{\neq 0}(X,h))$.  Recall that we
  chose $s$ so that $s^{-1} = N^{c / \log \log N}$.  So if
  $L_{\neq 0}(X,h) \ge N^{c/\log \log N}$, then
  $L_{\neq 0}(\Sigma_s) \gtrsim N^{c / \log \log N}$ as well.

  Theorem \ref{hardLXh} tells us that it is $\mathsf{NP}$-hard to distinguish the
  case $L_{\neq 0}(X,h) \sim 1$ from the case $L_{\neq 0}(X,h) \ge N^{c/\log \log N}$;
  therefore by the last two paragraphs it is also $\mathsf{NP}$-hard to
  distinguish the case $L_{\neq 0}(\Sigma_s) \sim 1$ from the case
  $L_{\neq 0}(\Sigma_s) \ge N^{c/\log \log N}$.  Therefore, it is $\mathsf{NP}$-hard
  to approximate $L_{\neq 0}(\Sigma)$ to within a factor of $N^{c / \log \log N}$.  

  Recall that $N$ was the number of simplices in $X$, but the number of simplices
  in $\Sigma$ is $N_\Sigma \le s^{-3}N^2$ and so eventually $\leq N^{2.1}$, so the
  same result holds with $N_\Sigma$ in place of $N$.
\end{proof}

\section{Approximating simplicial complexes by spheres}

If $n \ge 3$, the proof of Theorem \ref{thm:main} is similar, except that the
step of constructing $\Sigma$ and the map $p: \Sigma \rightarrow X$ is more
complicated.  If we just wanted to build triangulated $n$-manifolds $\Sigma$ so
that $L_{\neq 0}(\Sigma)$ is hard to approximate, we could use the same
construction, replacing the 2-spheres by $n$-spheres, and replacing the annuli
$S^1 \times [0,1]$ by $S^{n-1} \times [0,1]$.  We will show that if $n \ge 3$,
then $L_{\neq 0} (\Sigma)$ is hard to approximate even for triangulated spheres.
To do that, we have to construct maps of small girth from a triangulated sphere
$\Sigma$ to $X$.  This construction is based on work of Ferry and Okun
\cite{FOkun}.  Because the construction is a little complicated we explain it in
two stages.  First, we construct maps of small girth from $S^n$ to $X$.  This
illustrates the main geometric idea of the construction.  Next, we construct a
triangulated sphere $\Sigma$ with a controlled number of simplices and a
simplicial map of small girth from $\Sigma$ to a refinement of $X$.  For this
simplicial map, we have to check that the number of simplices is not too large
and that the map can be constructed in polynomial time from the data of $(X,h)$. 

\begin{lem} \label{lem:designer}
  Let $X$ be a simply connected triangulated $m$-complex, $n \geq 3$, and
  $\epsi>0$ a constant.  Let $h:S^n \to X$ be any map.  Then there is a piecewise
  Riemannian metric $g$ and a $1$-Lipschitz map $f:(S^n,g) \to X$ homotopic to
  $h$ whose $\epsi$-girth is at most $C(m,n)\epsi$.
\end{lem}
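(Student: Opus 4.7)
\emph{Proof proposal.} The plan is to construct $(S^n,g)$ as a ``cactus'' of small $n$-spheres joined by thin tubes, then repair the resulting topology to $S^n$ by a sequence of surgeries, exploiting that for $n\ge 3$ and simply connected $X$ every extra $S^1\times S^{n-1}$ summand can be killed without destroying the $1$-Lipschitz or girth estimates.

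First, I would choose an $\epsi$-net $\{x_i\}_{i=1}^K\subset X$ and form a graph $G$ on this vertex set by joining $x_i,x_j$ whenever $d_X(x_i,x_j)\le 3\epsi$. For each $i$, let $\Sigma_i$ be an $n$-sphere of diameter $\sim\epsi$ equipped with a $1$-Lipschitz map onto $\overline{B_\epsi(x_i)}$. For each edge of $G$, I would excise a small disk from each of $\Sigma_i$ and $\Sigma_j$ and glue in a thin tube $S^{n-1}\times[0,1]$ of cross-sectional radius $\ll\epsi$, mapped to a minimizing geodesic from $x_i$ to $x_j$ in $X$. The result is a piecewise Riemannian $n$-manifold $\Sigma'$ with a $1$-Lipschitz map $q\colon\Sigma'\to X$. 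Since any $2\epsi$-ball in $X$ meets only boundedly many of the $B_\epsi(x_i)$, and the associated $\Sigma_i$'s are pairwise linked by direct tubes of length $\lesssim\epsi$, the $\epsi$-girth of $q$ is already $\le C(m,n)\epsi$.

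Topologically, $\Sigma'\cong S^n\,\#\,(\#_k(S^1\times S^{n-1}))$ with $k$ equal to the number of non-tree edges of $G$ relative to a fixed spanning tree. To reduce the source to an honest $S^n$, I would kill each $S^1\times S^{n-1}$ summand by surgery. Each summand is generated by a loop $\gamma\subset\Sigma'$ consisting of a non-tree tube followed by its return through the tree; its image $q\circ\gamma$ is null-homotopic in $X$ (by simple connectivity) and therefore bounds a disk $D_\gamma\subset X$. In a tubular neighborhood of an embedded representative of $\gamma$, perform the standard surgery $S^1\times D^{n-1}\rightsquigarrow D^2\times S^{n-2}$, available because $n\ge 3$; extend $q$ over the attached $D^2\times S^{n-2}$ by identifying $D^2$ with $D_\gamma$ (as metric disks of matching diameter, so that the extension is $1$-Lipschitz) and collapsing the $S^{n-2}$ factor to a point. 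After $k$ such surgeries, the source is topologically $(S^n,g)$ and one verifies that the $\epsi$-girth remains $O_{m,n}(\epsi)$: any $2\epsi$-ball in $X$ that is far from every $D_\gamma$ sees only the original cactus structure, while a ball meeting some $D_\gamma$ has preimage in $D^2\times S^{n-2}$ of diameter $\lesssim\epsi$, together with a nearby piece of the cactus. Finally, to land in the prescribed homotopy class $[h]$, I would modify the resulting map on one designated sphere $\Sigma_{i_0}$ by replacing its map with a rescaled copy of $h$; because the cactus was assembled by iterated connect sums and $S^n\# S^n\cong S^n$, this adjusts the class of the map by exactly $[h]$ without disturbing the girth estimate.

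The main obstacle is the metric accounting of the surgeries: the filling disks $D_\gamma$ provided by simple connectivity of $X$ may have diameter much larger than $\epsi$, so the replacement handles $D^2\times S^{n-2}$ must be inflated correspondingly. Following Ferry--Okun, this inflation is confined to neighborhoods of the surgery sites and enlarges the absolute size of $\Sigma$ but leaves the girth-to-$\epsi$ ratio bounded by a constant depending only on $m$ and $n$. A secondary technical point is arranging that the multiplicity of the $\epsi$-net cover depends only on $m$, which may require a preliminary refinement of the triangulation of $X$ to uniformize its local combinatorics.
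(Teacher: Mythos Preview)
Your overall architecture is close in spirit to the paper's (and to Ferry--Okun), but two steps contain genuine gaps.

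\textbf{The surgery handles break the girth bound.} After you perform a surgery along a non-tree loop $\gamma$, the attached handle $D^2\times S^{n-2}$ is mapped onto the filling disk $D_\gamma\subset X$, which you correctly note may have diameter much larger than $\epsi$. Now take a point $p$ deep in the interior of that handle. Its image $f(p)\in D_\gamma$ lies within $\epsi$ of some cactus sphere $\Sigma_i$ (since the cactus image is $\epsi$-dense), so the preimage of $B_{2\epsi}(f(p))$ contains both a neighborhood of $p$ in the handle and a piece of $\Sigma_i$. But the only route in $\Sigma$ from $p$ to $\Sigma_i$ goes out to $\partial D^2\times S^{n-2}$, i.e.\ along $\gamma$, and that distance is comparable to the radius of $D_\gamma$, not to $\epsi$. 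Your sentence ``a ball meeting some $D_\gamma$ has preimage in $D^2\times S^{n-2}$ of diameter $\lesssim\epsi$, together with a nearby piece of the cactus'' conflates closeness in $X$ with closeness in $\Sigma$; the latter is exactly what fails. The paper confronts the identical issue (step~5 of its proof): after gluing in the large ``slabs'', it must go back and attach a second round of \emph{narrow} sticks and slabs connecting an $\epsi$-net on each slab to the nearby vertex-balls. These secondary attachments are short enough not to recreate the problem. Your proposal has no analog of this repair step, and the claimed girth bound does not follow without it.

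\textbf{The homotopy-class adjustment is not $1$-Lipschitz.} Replacing the map on one small sphere $\Sigma_{i_0}$ of diameter $\sim\epsi$ by ``a rescaled copy of $h$'' cannot be $1$-Lipschitz unless the image of $h$ has diameter $\lesssim\epsi$; and if instead you inflate the metric on $\Sigma_{i_0}$ to accommodate $h$, you are back to the girth problem above (points of $h(S^n)$ that are $\epsi$-close in $X$ need not be close on $\Sigma_{i_0}$). The paper avoids this entirely by \emph{starting} from $(S^n,g_0,h)$---with $g_0$ induced from an embedding of $h$ in a thickening of $X$---and then attaching balls, sticks, and slabs to that initial sphere via a bordism that keeps the homotopy type $S^n$ throughout. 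In particular it never has to correct the homotopy class after the fact.
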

\begin{proof}
  This is a quantitative version of an asymptotic result due to Ferry and Okun
  \cite{FOkun}.  We use a technique adapted from their result which is briefly
  described in \cite[p.~1089]{GuthSurf}, deforming $h$ in several steps.  We
  start by deforming $h$ to a nearby embedding in a thickening of $X$ and giving
  $S^n$ the metric $g_0$ induced by that map.  In the rest of the construction,
  we build up an $(n+1)$-dimensional stratified manifold, equipped with a
  Riemannian metric and a map to $X$, by iteratively attaching $(n+1)$-disks
  along $n$-disks on their boundary.  At every step, the space remains homotopy
  equivalent to $S^n$, and we can think of it as a topologically trivial bordism
  from $(S^n,g_0,h)$ to some $(S^n,g,f)$ which, at the end of the process, will
  have the properties we desire.

  The disks we glue in will take the form of ``balls'', ``sticks'', and
  ``slabs''. That is, we think of them as $D^i \times D^{n+1-i}$, for $i=0$, $1$
  and $2$ respectively, equipped with a map to $X$ which projects to a point,
  immersed curve, or immersed disk, and with the metric induced by the immersion
  in the $D^i$ direction and a round metric of radius $<<\epsi$ in the $D^{n+1-i}$
  direction.

  In detail, the process is as follows:
  \begin{enumerate}
  \item Fix a subdivision $X_\epsi$ of $X$ at scale $\epsi$.
  \item Connect all vertices of $X_\epsi$ in the $\epsi$-neighborhood of $h(S^n)$
    to $h(S^n)$ via geodesic segments.  Call this set of vertices $V_h$ and the
    set of segments $E_h$.
  \item Choose a spanning tree $T$ for the graph $X_\epsi^{(1)}/V_h$.  We attach
    sticks along the edges of $T$ and $E_h$ and balls at the vertices.  We may
    need to homotope $h$ slightly so that it is constant on the $n$-disks where
    we attach the balls.

    At this stage, the map $f_1:(S^n,g_1) \to X$ given by restricting to the end
    of the bordism has $\epsi$-dense image in $X$, but points in $f_1(X)$ which
    are close in $X$ may be far from each other in the induced metric on $f_1(X)$
    (and hence in $g_1$).
  \item Now let $x$ and $y$ be two vertices of $X_\epsi$ which are adjacent via
    an edge not in $T$.  In the next approximation $f_2:S^n \to X$, the portions
    of $f_2(S^n)$ near $x$ and $y$ will be close to each other.  We achieve this
    as follows.

    Let $x'$ and $y'$ be preimages of $x$ and $y$, respectively, in $(S^n,g_1)$.
    There is a path $\gamma$ in $(S^n,g_1)$ between $x'$ and $y'$; since $X$ is
    simply connected, $f_1(\gamma)$ is homotopic to the edge between $x'$ and
    $y'$ via some immersed disk.  We attach a slab which maps to this disk to
    our sphere.  Again we may need to homotope $h$ slightly near the attaching
    $n$-disk.

    Similarly, there may be points in the original sphere whose images are close
    to each other or to vertices of $X_\epsi$ to which they are not attached.  So
    for some $\epsi$-net on the original sphere, consider all pairs of points
    $x$ and $y$ such that $d(f_1(x),f_1(y))<\epsi$ but $d(x,y)>2\epsi$.  For each
    such pair we likewise attach a slab corresponding to a disk homotoping the
    image of a path in $(S^n,g_1)$ to the geodesic path.
  \item Now, $f_2$ still doesn't have small girth, since for a point $p$ deep in
    the interior of a slab, $f_2(p)$ may be close to images of other, distant
    points.  It is enough to create a shortcut from any such $p$ to the ball
    mapping to a nearby vertex $v$ of $X_\epsi$, since then any two such
    points will have a short path between each other through $v$.

    So in each slab $D$, choose an $\epsi$-net $V_D$ and a forest $T_D$
    connecting all the points of $V_D$ to the boundary of $D$.  This $T_D$ can be
    deformed to lie in $X_\epsi^{(1)}$ via a homotopy which moves every point by
    at most $2\epsi$.  Thus we can glue in sticks and slabs spanning each such
    homotopy; after this surgery, each vertex $v \in V_D$ is $2\epsi$-close in
    the induced metric to a point which approximates the nearest vertex of
    $X_\epsi$, and any point in the interior of one of the new slabs is
    $5\epsi$-close to one of these vertices.  In other words, because the new
    slabs are ``narrow'', they do not again have the problem of the slabs we
    added in the previous step.  Thus the resulting map $f$ at the end of the
    bordism has the desired small girth.
    \qedhere
  \end{enumerate}
\end{proof}

In order to perform the reduction in the proof of Theorem \ref{thm:main}, it is
not enough to show that such approximating spheres exist; we must also make sure
that they can be represented by simplicial complexes of polynomial size and
computed in polynomial time.  For this argument, it's important to note that the
complexes constructed in \S\ref{S:hard} are not only simply connected, but
effectively so.  In other words, we can construct a contraction of the
$1$-skeleton whose size is polynomial in the volume of the complex.

Since these complexes are constructed as wedges of spheres with attached mapping
cylinders, we can contract the 1-skeleton by first collapsing the mapping
cylinders onto the wedge of spheres and then puncturing each sphere in some
simplex and contracting away from those points.

Call the relevant complex $X$.  During each of the two stages, loops are expanded
by a factor of $O(|X|)$ and each point travels a distance $O(|X|)$.  In other
words the overall contraction $X^{(1)} \times [0,1] \to X$ can be made simplicial
on a subdivision at scale $\sim 1/|X|^2$.

To complete the argument, we give an algorithmic, simplicial version of the
construction in Lemma \ref{lem:designer}.

\begin{lem} \label{alg:designer}
  Suppose $X$ is a simplicial $m$-complex equipped with an explicit contraction
  $c:X^{(1)} \times [0,1] \to X$, expressed as a simplicial map on a subdivision
  with $V$ facets.  Let $\Sigma_0$ be a triangulation of the $n$-sphere
  and $f_0:\Sigma_0 \to X$ a simplicial map.  Let $\epsi>0$.  Then there is a
  subdivision $X_\epsi$ of $X$ and a triangulation $\Sigma$ of the $n$-sphere and
  a simplicial map $f:\Sigma \to X_\epsi$ such that
  \begin{itemize}
  \item $\Sigma$ has $\poly_n(|X|,|\Sigma_0|,V,\epsi^{-m})$ simplices;
  \item The $\epsi$-girth of $f$ is at most
    $C(m,n)\epsi^{-1} \cdot \operatorname{scale}(X_\epsi)$, when $\Sigma$ is given
    the standard simplexwise linear metric;
  \item $\Sigma$ and $f$ can be computed from $X$, $f_0$, and $c$ in polynomial
    time for fixed $n$.
  \end{itemize}
\end{lem}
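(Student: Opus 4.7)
The plan is to make the four-step bordism construction of Lemma \ref{lem:designer} fully simplicial and polynomial-time, substituting the explicit contraction $c$ for the bare ``simply connected'' hypothesis used there.

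First I would produce a simplicial subdivision $X_\epsi$ of $X$ at scale roughly $\epsi$ by standard iterated subdivision; this has $\poly_n(|X|, \epsi^{-m})$ simplices, and $f_0$ may be homotoped to a simplicial map into $X_\epsi$ at polynomial cost. Next, for each vertex $v$ of $X_\epsi$ in the $\epsi$-neighborhood of the image I attach a simplicial ``ball'' (a triangulated $n$-disk mapping to $v$), and, choosing a spanning tree $T$ of the graph $X_\epsi^{(1)}/V_h$, I glue on simplicial ``sticks'' of the form $D^{n-1} \times I$ along each edge of $T \cup E_h$. The thin factor $D^{n-1}$ is triangulated as a combinatorial sphere boundary of radius much less than $\operatorname{scale}(X_\epsi)$. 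This produces an intermediate $(\Sigma_1, f_1)$ whose image is $\epsi$-dense in $X$, built from $\poly$ many simplices.

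The substantive combinatorial step is the slab construction. For each edge $\{x,y\}$ of $X_\epsi^{(1)}$ not in $T$, the loop formed by the tree paths to $x$, $y$ together with the edge is null-homotopic, and applying $c$ on the pushforward of this loop yields a simplicial filling disk $D_{x,y}$ of size $\poly(V)$. I glue in a slab $D_{x,y} \times D^{n-1}$ along an $(n-1)$-disk in its boundary, mapping via $D_{x,y}$ in the first factor and collapsing the second. Analogously, for pairs of points in a combinatorial $\epsi$-net on $\Sigma_1$ whose images are $\epsi$-close in $X_\epsi$ but whose preimages are far apart in $\Sigma_1$, I invoke $c$ on the corresponding loop to attach another slab. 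Finally, within each slab I add a forest of narrow sticks and sub-slabs, again furnished by $c$, so that every interior point of a slab is within $\epsi$ (in the intrinsic metric) of a ball sitting over some $X_\epsi$-vertex. The resulting $\Sigma$ is a triangulated $n$-sphere; counting the $\poly_n(|X|, \epsi^{-m})$ edges needing slabs, each of size $\poly(V)$, together with the $\poly_n(|\Sigma_0|, \epsi^{-m})$ contribution from thickening $\Sigma_0$ and the refinement in Step~4, gives the required polynomial simplex bound, and every step is computable in polynomial time from $(X, f_0, c)$.

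The main obstacle is the girth estimate in the simplicial setting: one must check that after every attachment, two points whose $f$-images lie within $2\epsi$ in $X_\epsi$ are joined in $\Sigma$ by a combinatorial path whose length, rescaled by $\operatorname{scale}(X_\epsi)$, is at most $C(m,n)\epsi$. Step~4 is the key: without the interior shortcuts, the intrinsic diameter of a slab can be as large as the size of the filling disk $D_{x,y}$, which is $\poly(V)$ and unacceptable. With the shortcuts, any interior point of a slab reaches a nearby ball in $O(\epsi)$ steps, and the girth estimate then reduces to the tree/edge analysis already performed in Lemma \ref{lem:designer}. Carrying through this bookkeeping, with careful attention to the thin-direction radius and to the conversion factor $\operatorname{scale}(X_\epsi)$ between the unit-edge simplicial metric on $\Sigma$ and distances in $X_\epsi$, yields the stated bound $C(m,n)\epsi^{-1} \cdot \operatorname{scale}(X_\epsi)$.
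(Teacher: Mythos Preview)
Your outline follows the same strategy as the paper---a combinatorial, size-controlled version of the ball/stick/slab bordism of Lemma~\ref{lem:designer}, with the provided contraction $c$ standing in for simple connectivity---and the overall architecture (subdivide, spanning tree, slabs from fillings, secondary shortcuts inside slabs) is correct.

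There is one genuine gap. A slab $D_{x,y}\times D^{n-1}$ is an $(n{+}1)$-disk, and it must be attached along an $n$-disk in its boundary, namely a thickening $\gamma\times D^{n-1}$ of the \emph{entire path} $\gamma\subset\Sigma_1$ joining the chosen preimages of $x$ and $y$; attaching along a single $(n{-}1)$-disk, as you write, does not create the shortcut. Once you attach along full paths, the attaching regions of the polynomially many slabs will overlap in $\Sigma_1$, and you cannot glue them in simultaneously without further work. The paper resolves this by an additional subdivision of both domain and codomain with an explicit polynomial scale parameter $M\sim|V_\epsi|^2$, and then routes the paths $p_{ij}$ to pairwise disjoint refinements $p_{ij}'$ using a layered entry/exit scheme (cf.\ \cite{KolmBarz}). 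The same issue recurs when you attach the secondary slabs inside the big slabs, requiring a second such subdivision. Without this routing step your construction is not well defined as a simplicial object, and the polynomial size bound is not yet justified.

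A smaller point: $c$ contracts $X^{(1)}$, not $X_\epsi^{(1)}$. The paper therefore builds each slab in stages---first a cubical annulus retracting $f_2(p_{ij}')\subset X_\epsi^{(1)}$ to a loop in $X^{(1)}$, then an annulus realising $c$ on that loop, then a cone---and this retraction contributes its own factor of~$\epsi^{-1}$ to the simplex count. You should make this explicit rather than applying $c$ directly to loops in $X_\epsi^{(1)}$.
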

\begin{proof}
  We proceed with a version of the construction in Lemma \ref{lem:designer}.  At
  each step, we further subdivide both the domain and the codomain complexes.

  Recall that we can subdivide an $n$-simplex into cubes by adding the midpoint
  of each subsimplex as a vertex.  We can then subdivide those cubes as finely as
  necessary; if we prefer to work with simplicial maps, we can subdivide the
  cubes again to form simplices.  This is somewhat less uniform than the edgewise
  subdivision of \cite{EdGr}, but easier to work with.

  In the beginning, we are given a simplicial map $f_0:\Sigma_0 \to X$.  We start
  by subdividing both the domain and codomain with scale parameter
  $E \sim 1/\epsi$ to form cubical complexes $\Sigma_\epsi$ and $X_\epsi$ so that
  the vertices of $X_\epsi$ form an $\epsi$-net and the edge lengths range
  between about $\epsi$ and $(m+1)\epsi$.

  In order to give ourselves more space to cut and paste, we subdivide
  $\Sigma_\epsi$ with another three cubes to a side to produce $\Sigma_1$.  We
  then precompose the map $f_0$ with a map $\Sigma_1 \to \Sigma_\epsi$ which sends
  the middle cube of each cube of $\Sigma_\epsi$ to the corresponding full cube.

  Now we build a relative spanning tree $T$ for the pair of graphs
  $(X_\epsi^{(1)},f_1(\Sigma_1)^{(1)})$.  This has degree bounded by $C(m)\deg(X)$,
  and so certainly by $C(m)|X|$.  Now fix a cubulation of the $n$-ball with
  enough boundary facets that we can shade $\deg T$ of them and have a path along
  the boundary between any two shaded facets which does not pass through any
  other shaded facets, and which has diameter $C(n)$.\footnote{For example a
    slight subdivision of the simplicial double cone on a sufficiently fine
    triangulation of $D^n$.}  We assemble these, together with $(n+1)$-cubes for
  the sticks, into a cubical thickening of $T$, equipped with maps sending the
  balls to the vertices of $X_\epsi$ and the sticks to the edges.  Finally, for
  each vertex in $f_1(\Sigma_1)$, we arbitrarily choose an $n$-cube in $\Sigma_1$
  mapping to the appropriate vertex and glue it to some boundary facet of the
  corresponding ball.  We let $\Sigma_2$ be the $n$-dimensional ``outer
  boundary'' of the resulting complex, and $f_2:\Sigma_2 \to X_\epsi$ the
  restriction of the associated map.

  Now fix a set of $n$-cells $V_\epsi$ of $\Sigma_2$ which are mapped to vertices
  of $X_\epsi$ and form a net of a certain constant density on $\Sigma_2$; in
  particular, we include a boundary $n$-cell of the balls that we added in as
  well as a single $n$-cell adjacent to each vertex of $\Sigma_\epsi$.

  The next step is to glue in slabs to make sure that the cells of $V_\epsi$
  that map to nearby vertices of $X_\epsi$ are actually nearby.  In order to have
  enough space to do this, we subdivide the domain and codomain again, this time
  with scale parameter
  $$M=10(n+1)|V_\epsi|^2.$$
  Clearly this is polynomial in the various parameters.  We call the resulting
  complexes $\Sigma_M$ and $X_M$.

  Now we find and totally order all pairs $v_i,v_j \in V_\epsi$ so that $f_2(v_i)$
  and $f_2(v_j)$ are equal or adjacent as vertices of $X_\epsi$, but
  $d_{\Sigma_2}(v_i,v_j)>C(n)$, where $C(n)$ is large enough to exclude pairs of
  adjacent vertices in $T$ or $\Sigma_\epsi$.  For each such pair, we build a
  path $p_{ij}$ from $v_i$ to $v_j$ of $n$-cells in $\Sigma_2$ which map to
  $X_\epsi^{(1)}$; a shortest such path can be found in polynomial time after
  building the graph of such $n$-cells.  We can in addition assume that each
  $p_{ij}$ goes ``straight through'' every cell which maps outside $X_\epsi^{(0)}$,
  so that the preimage of each open edge is a disjoint union of open cells each
  of which projects onto that edge by forgetting some coordinates.

  We now build pairwise disjoint paths $p_{ij}^\prime$ of $n$-cubes in $\Sigma_M$,
  where each $p_{ij}^\prime$ is a refinement of $p_{ij}$.  To make sure that they
  don't intersect, we assign each pair $(i,j)$ a distinct distance from the
  middle of cubes of $\Sigma_\epsi$ as well as an ``entry/exit point'' on each
  face, and have the entry/exit stretches and internal stretches map to different
  layers.\footnote{See \cite{KolmBarz} for more details of a similar construction
    in three dimensions, used to fit many maximally separated paths into a cube.}
  The paths $p_{ij}^\prime$ consist of mostly straight chains of cubes with the
  occasional kink.

  For each $i$ and $j$, $f_2(p_{ij}^\prime)$ is a path in $X_\epsi^{(1)}$, and
  (together with the edge from $f_2(v_i)$ to $f_2(v_j)$ if they are not the same)
  it forms a loop $q_{ij}:S^1 \to X_\epsi^{(1)}$.  We would like to glue in a slab
  equipped with a map to a contraction of this loop.  This slab consists of
  several pieces:
  \begin{enumerate}
  \item A thickened annulus which straightens out the kinks in the chain of
    cubes; both sides map to the same path in $X_\epsi^{(1)}$.

    Since the kinks are isolated and come in $\text{const}(n)$ types, this can be
    done with $\text{const}(n) \cdot \text{length}(p_{ij}^\prime)$ vertices,
    perhaps after subdividing both ends a bounded number of times (in which case
    we also must subdivide $\Sigma_M$ and $X_M$.)

    The remainder of the slab will have the product cubical structure of a disk
    with $[0,1]^{n-1}$.
  \item Another thickened annulus which maps to a composition of the path with a
    retraction of $X_\epsi^{(1)}$ to $X^{(1)}$.

    We can perform such a retraction by expanding outward in each direction from
    one central $\epsi$-cube.  This map has Lipschitz constant $\sim \epsi^{-1}$
    and so the inner side of this annulus will consist of
    $$\text{const}(n) \cdot \epsi^{-1} \cdot \text{length}(p_{ij}^\prime)$$
    cubes.
  \item A thickened annulus which maps to the composition of this with the
    provided contraction of $X^{(1)}$.  This consists of
    $$\text{const}(n) \cdot V\epsi^{-1} \cdot \text{length}(p_{ij}^\prime)$$
    cubes.
  \item A thickened disk which maps to the basepoint to which $X^{(1)}$ is
    contracted.  This is a thickening of a cone on the inside of the previous
    annulus.
  \end{enumerate}
  After all the slabs are glued on, we let $\Sigma_3$ be the $n$-dimensional
  ``outer boundary'' of the resulting complex, equipped with a map
  $f_3:\Sigma_3 \to X_M$.

  We now need to algorithmically glue in thin slabs so that the new disks are
  brought close to other parts of the complex which are mapped to nearby points
  of $X_M$.  We first choose an $\epsi$-net of vertices in each disk and the
  shortest path via $n$-cubes from each point in the net to the boundary of the
  disk.  (This means retaining the data of whether a given portion of $\Sigma_3$
  is inherited from $\Sigma_M$.)  Call the set of all the vertices in these nets
  $V_\epsi'$

  Now, for each of these paths, we choose a path in $\Sigma_M$ which maps to a
  nearby path in $X_M$.  Specifically, we make the choice as follows.  Choose a
  sequence of ``signposts'' $x_i$ at intervals of length $\epsi/3m$ along the
  path.  For each signpost $x_i$, choose the nearest vertex $v_i$ of $X_\epsi$.
  Then the path between $x_i$ and $x_{i+1}$ is contained within the star of the
  vertex or edge spanned by $v_i$ and $v_{i+1}$, and we can build a cubical
  approximation to the homotopy which retracts that path linearly to the vertex
  or edge.

  The $v_i$ give us a corresponding path in $X_\epsi$, and we can greedily
  construct a path of $n$-cubes in $\Sigma_M$ which maps to that path, ending at
  the same point on the boundary of the disk.  After subdividing $\Sigma_3$ and
  $X_M$ yet again, this time with scale parameter
  $$M'=10(n+1)|V_\epsi'|,$$
  we use the same technique as before to create non-intersecting sub-paths.  Then
  for each pair of sub-paths, we glue in a slab whose cells map to $X_M$ via the
  piecewise linear homotopy described above.  The number of cubes in this slab is
  $$O(\text{length of the path} \cdot M \cdot (M')^2).$$
  This completes the construction of the space $\Sigma$ and the map $f$.
\end{proof}

\appendix
\section{$\mathsf{NP}$ algorithms for the Lipschitz constant}

In this appendix, we show that $L_{\neq 0}(\Sigma)$ and $L_1(\Sigma)$ can be
approximated up to a constant factor by an $\mathsf{NP}$ algorithm.  In fact,
$\mathsf{NP}$ algorithms can give constant factor approximations to
$\Lip(\alpha)$ in the more general setting of Theorem \ref{lip=comass}.  We need
a little background to state the result precisely.

An \emph{$\mathsf{NP}$ optimization problem} is an optimization problem for
which feasible solutions can be verified in polynomial time.  In other words, it
is a numerical function $f$ from some input set $A$ to $\mathbb{N}$ (or some
other discrete totally ordered set) such that the statement ``$f(x) \leq r$'',
if true, has a polynomial-time certificate.  Then the true optimum can be found
by following all possible certificates in nondeterministic fashion.

\begin{thm} \label{NPopt}
  If $n$ is odd or $m \leq 2n-2$, there is an $\mathsf{NP}$ optimization problem
  which, given an $m$-dimensional simplicial complex $X$ and a class
  $\alpha \in [X,S^n]$, outputs a value within a multiplicative constant $C(m,n)$
  of $\Lip(\alpha)$.
\end{thm}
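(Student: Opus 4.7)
The plan is to exhibit, for each upper bound $\Lip(\alpha) \le L$ that is tight up to $C(m,n)$, a polynomial-size NP certificate. Assume $\alpha$ is presented as a simplicial map $f_0 \colon X \to Y_0$ for some triangulation $Y_0$ of $S^n$. The certificate will consist of a triangulation $Y$ of $S^n$, a subdivision $X'$ of $X$, a simplicial map $g \colon X' \to Y$, and a simplicial homotopy from $g$ to $f_0$ on a common refinement. The reported value is $\Lip(g)$, read off from edge-length ratios, and the homotopy is verified by a routine combinatorial check. The infimum over valid certificates is $\Lip(\alpha)$ by definition, so the only substantive task is showing that it is attained within a factor $C(m,n)$ by certificates of polynomial size.

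For this, I begin with any continuous $f\in\alpha$ with $\Lip(f)\le 2\Lip(\alpha)$, set $\epsi=\Lip(f)$, triangulate $S^n$ with simplices of size $\epsi$, and subdivide $X$ at scale $\min(1,1/\epsi)$. Standard simplicial approximation — send each vertex of $X'$ to a vertex of $Y$ in the star of its image under $f$ and extend linearly — yields $g\colon X' \to Y$ linearly homotopic to $f$ in $S^n$, with Lipschitz constant $O(\epsi)=O(\Lip(\alpha))$. Both $X'$ and $Y$ have polynomially many simplices, because the volume argument $\Lip(\alpha)\ge (\vol(S^n)/\vol(X))^{1/n}$ applied to any non-zero $\alpha$ forces $\epsi^{-n} \lesssim |X|$, while $\epsi$ is at most the Lipschitz constant of the input simplicial representative $f_0$, which is polynomially bounded.

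The main obstacle is producing the simplicial homotopy from $g$ back to $f_0$ in polynomial size; this is what certifies that $g$ actually represents $\alpha$ rather than some other class with the same image in $H^n(X;\ZZ)$. My plan is to build this homotopy as a concatenation: first a skeleton-by-skeleton homotopy from $f_0$ to a canonical simplicial representative, supplied by the construction in the proof of Theorem~\ref{lip=comass}, whose form is determined by $\alpha^*[S^n]$ together with a bounded amount of secondary data; then the geodesic linear homotopy from that canonical form to our approximation $g$, which is well defined since each $g(x)$ lies within $O(\epsi) \ll \pi$ of its canonical counterpart. The uniform finite-to-one property of $[X,S^n] \to H^n(X;\ZZ)$ in the dimension range of the theorem, noted in Section~2, ensures that the secondary data distinguishing classes with fixed cohomology image has polynomial bit-size, so the overall certificate remains polynomial.
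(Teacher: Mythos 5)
Your overall architecture (certificate = a coarse-enough simplicial map realizing the bound, polynomial size guaranteed by a lower bound on $\Lip(\alpha)$ in terms of $|X|$) matches the paper's, but there is a genuine gap in the step you yourself identify as the main obstacle: certifying the homotopy class. The paper does \emph{not} put a simplicial homotopy into the certificate at all; it invokes the result of [FiVo] that, for fixed $\dim X$ and simply connected targets, homotopy of two given simplicial maps can be \emph{decided} in polynomial time, so the verifier simply runs that algorithm on $g$ and $f_0$. Your alternative --- an explicit polynomial-size simplicial homotopy as part of the certificate --- would be a legitimate (and more self-contained) route, but the construction you sketch does not work. The ``canonical simplicial representative'' produced by the proof of Theorem \ref{lip=comass} from $f_0$ is controlled only in Lipschitz constant, not pointwise: there is no reason your approximation $g$ (built from an arbitrary near-optimal $f\in\alpha$) lands within $O(\epsi)$ of it, so the ``geodesic linear homotopy'' between them is not defined. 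Moreover, the skeleton-by-skeleton homotopies in that proof are ``extended arbitrarily to higher skeleta'' with no size control, so they do not obviously yield a polynomial-size simplicial homotopy. What you would actually need is the quantitative theorem that two homotopic $L$-Lipschitz maps are joined by an $O(L+1)$-Lipschitz homotopy (then simplicially approximate the homotopy); that is a real theorem in the [CDMW]/[PCDF] circle of ideas, not something that follows from ``the secondary data has polynomial bit-size.''

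A secondary issue: your lower bound $\Lip(\alpha)\gtrsim(\vol S^n/\vol X)^{1/n}$, as derived from the comass inequality, is vacuous for non-nullhomotopic classes with $\alpha^*[S^n]=0$ (e.g.\ the suspension of the Hopf map, which exists in the range $m\le 2n-2$ for $n\ge 3$). You need the paper's argument instead: any non-nullhomotopic map to $S^n$ is surjective, and a surjective $L$-Lipschitz map from an $n$-complex forces $L^n\vol X\gtrsim\vol S^n$ (the paper runs this combinatorially via the quantitative simplicial approximation theorem and simplex counting). Without this, the subdivision of the target sphere in your certificate is not polynomially bounded for such classes.
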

Informally, we can say that computing the Lipschitz norm up to $C(m,n)$ is an
$\mathsf{NP}$ optimization problem.  In addition, if $\Lip([f]) \geq 1$, then
it can even be computed in polynomial time up to a multiplicative constant using
the algorithm in \S\ref{S:comass}.

The first step of the proof is the following absolute lower bound for
$\Lip(\alpha)$.  

\begin{lem}
  If $\alpha \in [X,S^n]$ is not the zero class, then
  $$\Lip(\alpha) \geq c(m,n)(\#\text{ of simplices of }X)^{-1/n}.$$
\end{lem}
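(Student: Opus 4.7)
The plan is a direct volume comparison that bypasses Theorem~\ref{lip=comass}. That bypass is forced on us because when $n$ is even, a nonzero class $\alpha$ may well have $\alpha^*[S^n]=0$ (think of the Hopf class in $\pi_3(S^2)$), so one cannot route the lower bound through the comass.

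First I would record the standard fact: if $f:X\to S^n$ is not surjective, then $f(X)$ lies in $S^n\setminus\{p\}\cong\mathbb{R}^n$, which is contractible, so $f$ is nullhomotopic. Hence, if $\alpha\neq 0$, \emph{every} representative $f$ of $\alpha$ is surjective, so $\mathcal{H}^n(f(X))=\vol(S^n)$.

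Next I would bound $\mathcal{H}^n(f(X))$ in terms of $L=\Lip(f)$ and $N$. We may assume $L$ is smaller than some constant depending on $n$, since otherwise $L\geq c(m,n)N^{-1/n}$ is automatic. In this regime, each simplex $\sigma$ of $X$ (in the unit-equilateral simplexwise-linear metric) has diameter $1$, so $f(\sigma)$ has diameter at most $L$ and lies in a geodesic ball of radius $L$ in $S^n$, whose $n$-dimensional Hausdorff measure is at most $C(n)L^n$. Summing over simplices (those of dimension $<n$ contribute nothing to $\mathcal{H}^n$):
$$\vol(S^n)\;\leq\;\mathcal{H}^n(f(X))\;\leq\;\sum_\sigma \mathcal{H}^n(f(\sigma))\;\leq\;C(n)\,L^n\,N,$$
which rearranges to $L\geq c(n)N^{-1/n}$. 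Taking the infimum over $f\in\alpha$ yields the stated bound on $\Lip(\alpha)$.

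There is no real obstacle here: the contractibility of $S^n\setminus\{p\}$ handles the homotopy side, and the Hausdorff measure bound on $f(\sigma)$ is the usual Lipschitz estimate once one knows $f(\sigma)$ fits in a small geodesic ball, which is why the large-$L$ case is disposed of separately (so that radius-$L$ balls in $S^n$ behave like Euclidean balls and the constant $C(n)$ is genuine).
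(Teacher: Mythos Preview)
Your argument is correct and takes a genuinely different, more elementary route than the paper.  Both proofs begin with the same observation: a non-nullhomotopic map $X\to S^n$ must be surjective.  From there the paper invokes the quantitative simplicial approximation theorem (QSAT), passes to simplicial maps between edgewise subdivisions $X_s\to S^n_t$, and counts $n$-simplices on both sides to force $s$ large once $t\sim N^{1/n}$.  You instead bound $\mathcal H^n(f(X))$ directly: each simplex has diameter $1$, so its image sits in a geodesic ball of radius $L$, and for $L$ bounded by a dimensional constant such a ball has $n$-measure $\leq C(n)L^n$; summing over the $N$ simplices and comparing with $\vol(S^n)$ gives the bound.  Your case split (large $L$ trivial, small $L$ handled by the ball estimate) is clean, and as you note the resulting constant depends only on $n$, which is slightly sharper than the $c(m,n)$ in the statement.

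The trade-off is contextual rather than mathematical.  The paper's detour through QSAT is not gratuitous: the same simplicial-approximation machinery is precisely what is used immediately after the lemma to produce the polynomial-size certificates for the $\mathsf{NP}$ optimization algorithm in Theorem~\ref{NPopt}, so the authors are effectively proving the lemma as a byproduct of setting up those certificates.  Your volume argument proves the lemma more directly but does not by itself supply that certificate structure.
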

\begin{proof}
  First, a non-nullhomotopic map to $S^n$ always covers $S^n$: otherwise, it is a
  map to the disk and hence nullhomotopic.
	
  Secondly, at the cost of a multiplicative constant, we can restrict to
  simplicial maps from a subdivision of $X$ to a subdivision of $Y$, because of
  the following result:
  \begin{lem}[Quantitative simplicial approximation theorem \cite{CDMW}]
    Let $X$ and $Y$ be simplicial complexes with the standard simplexwise linear
    metric.  Then there is a constant
    $C_{\mathrm{QSAT}}=C_{\mathrm{QSAT}}(\dim X, \dim Y)$ such that every
    $L$-Lipschitz map $f:X \to Y$ can be approximated by a
    $C_{\mathrm{QSAT}}(L+1)$-Lipschitz simplicial map on a subdivision of $X$ at
    scale $\sim 1/L$.
  \end{lem}
  We can choose the subdivision on both sides to be the edgewise subdivision due
  to Edelsbrunner and Grayson \cite{EdGr} with scale parameters $s$ and $t$,
  respectively, indicating the degree of subdivision of each edge.  When we give
  the simplices of the subdivision of $X$ the standard linear metric, the
  resulting metric space differs from an $s$-times dilation of $X$ by a
  multiplicative constant $C(\dim X)$; likewise for $Y$ and $t$.  We call the
  resulting metric spaces $X_s$ and $Y_t$.

  In order to cover $S^n$, a simplicial map must hit every $n$-simplex of $S^n$.
  Thus to admit a surjective simplicial map $X_s \to S^n_t$ without subdividing
  further, $X_s$ must have at least as many $n$-simplices as $S^n_t$.  Now, $X_s$
  has
  $$\leq (\#\text{ of simplices of }X) \cdot C(m) \cdot s^m$$
  $n$-simplices.  In particular, we can choose
  $t=C(m,n)(\#\text{ of simplices of }X)^{1/n}$ such that a surjective simplicial
  map $X_s \to S^n_t$ can only exist when $s \geq 2C_{\mathrm{QSAT}}(m,n)$.  By the
  lemma, this means that any surjective map $X \to S^n_t$ must have Lipschitz
  constant at least $1$, and so any non-nullhomotopic map $X \to S^n$ must have
  Lipschitz constant at least $1/t$.
\end{proof}

Thus, suppose we are given a map $f:X \to S^n$ which we know is not
nullhomotopic.  Then:
\begin{itemize}
\item A homotopic simplicial map $X_s \to S^n_t$, where
  $t=O((\#\text{ of simplices of }X)^{1/n})$ and $s \leq 2C_{\mathrm{QSAT}}(m,n)$,
  is a certificate that $\Lip([f]) \leq s/t$.
\item Moreover, if there is \emph{no} such simplicial map, then
  $\Lip([f]) \geq 1/t$.
\end{itemize}

In our situation, the (polynomial-size) data of a simplicial map $X_s \to S^n_t$
in the right homotopy class is such a certificate, and its validity can be
verified in polynomial time.  In particular, if the homotopy class is specified
using a representative map, then the fact that our map is homotopic to it can be
computed in polynomial time for fixed dimension of $X$ \cite{FiVo}.  This shows
Theorem \ref{NPopt}.

\bibliographystyle{amsplain}
\bibliography{hardapprox}

\providecommand{\bysame}{\leavevmode\hbox to3em{\hrulefill}\thinspace}
\providecommand{\MR}{\relax\ifhmode\unskip\space\fi MR }
\providecommand{\MRhref}[2]{%
  \href{http://www.ams.org/mathscinet-getitem?mr=#1}{#2}
}
\providecommand{\href}[2]{#2}
\begin{thebibliography}{10}

\bibitem{Resc}
A.~Berdnikov and F.~Manin, \emph{Scalable spaces}, arXiv preprint
  arXiv:1912.00590 (2019).

\bibitem{Berd}
Aleksandr Berdnikov, \emph{Lipschitz null-homotopy of mappings {$S^3 \to
  S^2$}}, arXiv preprint arXiv:1811.02606 (2018).

\bibitem{CDMW}
G.~R. Chambers, D.~Dotterrer, F.~Manin, and S.~Weinberger, \emph{Quantitative
  null-cobordism}, J.\ Amer.\ Math.\ Soc. \textbf{31} (2018), no.~4,
  1165--1203.

\bibitem{CMW}
G.~R. Chambers, F.~Manin, and S.~Weinberger, \emph{Quantitative nullhomotopy
  and rational homotopy type}, Geometric and Functional Analysis (GAFA)
  \textbf{28} (2018), no.~3, 563--588.

\bibitem{Dinur}
I.~Dinur, \emph{Approximating {${\rm SVP}_\infty$} to within almost-polynomial
  factors is {NP}-hard}, Theoret. Comput. Sci. \textbf{285} (2002), no.~1,
  55--71, Algorithms and complexity (Rome, 2000). \MR{1925787}

\bibitem{DrFeWe}
A.~N. Dranishnikov, Steven~C. Ferry, and Shmuel Weinberger, \emph{Large
  {R}iemannian manifolds which are flexible}, Ann. of Math. (2) \textbf{157}
  (2003), no.~3, 919--938. \MR{1983785}

\bibitem{DVW}
Ramsay Dyer, Gert Vegter, and Mathijs Wintraecken, \emph{Riemannian simplices
  and triangulations}, Geom. Dedicata \textbf{179} (2015), 91--138.
  \MR{3424659}

\bibitem{EdGr}
H.~Edelsbrunner and D.~R. Grayson, \emph{Edgewise subdivision of a simplex},
  Discrete \& Computational Geometry \textbf{24} (2000), no.~4, 707--719.

\bibitem{FOkun}
S.~Ferry and B.~Okun, \emph{Approximating topological metrics by {R}iemannian
  metrics}, Proceedings of the American Mathematical Society \textbf{123}
  (1995), no.~6, 1865--1872.

\bibitem{FiVo}
M.~Filakovsk{\'y} and L.~Vok{\v{r}}{\'i}nek, \emph{Are two given maps
  homotopic? {A}n algorithmic viewpoint}, Foundations of Computational
  Mathematics \textbf{20} (2020), no.~2, 311--330.

\bibitem{GrHED}
M.~Gromov, \emph{Homotopical effects of dilatation}, Journal of Differential
  Geometry \textbf{13} (1978), no.~3, 303--310.

\bibitem{GrWRI}
\bysame, \emph{Width and related invariants of {R}iemannian manifolds},
  Asterisque \textbf{163-164} (1988), no.~6, 93--109.

\bibitem{GrLa}
Mikhael Gromov and H.~Blaine Lawson, Jr., \emph{Spin and scalar curvature in
  the presence of a fundamental group. {I}}, Ann. of Math. (2) \textbf{111}
  (1980), no.~2, 209--230. \MR{569070}

\bibitem{GuthSurf}
L.~Guth, \emph{Lipshitz maps from surfaces}, Geometric \& Functional Analysis
  (GAFA) \textbf{15} (2005), no.~5, 1052--1090.

\bibitem{GuMV}
\bysame, \emph{Minimax problems related to cup powers and {S}teenrod squares},
  Geometric \& Functional Analysis (GAFA) \textbf{18} (2009), no.~6,
  1917--1987.

\bibitem{RPQT}
\bysame, \emph{Recent progress in quantitative topology}, Surveys in
  Differential Geometry \textbf{22} (2017), 191--216.

\bibitem{KolmBarz}
A.N. Kolmogorov and Ya.M. Barzdin, \emph{On the realization of nets in
  3-dimensional space}, Problems in Cybernetics \textbf{8} (1967), no.~261-268,
  259--260.

\bibitem{PCDF}
F.~Manin, \emph{Plato's cave and differential forms}, Geometry \& Topology
  \textbf{23} (2019), no.~6, 3141--3202.

\bibitem{CRM}
Shmuel Weinberger, \emph{Computers, rigidity, and moduli}, M. B. Porter
  Lectures, Princeton University Press, Princeton, NJ, 2005. \MR{2109177}

\end{thebibliography}
\end{document}